\providecommand{\U}[1]{\protect\rule{.1in}{.1in}}
\newtheorem{theorem}{Theorem}[section]
\newtheorem{corollary}{Corollary}[section]
\newtheorem{remark}{Remark}[section]
\newtheorem{proposition}{Proposition}[section]
\newtheorem{definition}{Definition}[section]
\newcommand{\punt}{\boldsymbol{.}}
\newcommand{\pibs}{\boldsymbol{\pi}}
\newcommand{\cbs}{\boldsymbol{c}}
\newcommand{\ibs}{\boldsymbol{i}}
\newcommand{\kbs}{\boldsymbol{k}}
\newcommand{\jbs}{\boldsymbol{j}}
\newcommand{\ml}{\mathfrak{m}}
\newcommand{\tbs}{\boldsymbol{t}}
\newcommand{\mubs}{\boldsymbol{\mu}}
\newcommand{\nubs}{\boldsymbol{\nu}}
\newcommand{\lambdabs}{\boldsymbol{\lambda}}
\newcommand{\taubs}{\boldsymbol{\tau}}
\newcommand{\mmodels}{\boldsymbol{\vdash}}
\newcommand{\tworow}[2]{\genfrac{}{}{0pt}{}{#1}{#2}}
\begin{document}

\title{A new algorithm for computing the multivariate Fa\`a di Bruno's formula}
\author{E. Di Nardo \thanks{Dipartimento di Matematica e Informatica, Universit\`a
degli Studi della Basilicata, Viale dell'Ateneo Lucano 10, 85100 Potenza,
Italia, elvira.dinardo@unibas.it}, G. Guarino \thanks{Medical School, Universit\`a
Cattolica del Sacro Cuore (Rome branch), Largo Agostino Gemelli 8, I-00168, Roma,
Italy., E-mail: giuseppe.guarino@rete.basilicata.it}, D. Senato
\thanks{Dipartimento di Matematica e Informatica, Universit\`a degli Studi
della Basilicata, Viale dell'Ateneo Lucano 10, 85100 Potenza, Italia,
domenico.senato@unibas.it}}
\date{\today}
\maketitle

\begin{abstract}
A new algorithm for computing the multivariate Fa\`a di
Bruno's formula is provided. We use a symbolic approach based on the
classical umbral calculus that turns the computation of the
multivariate Fa\`a di Bruno's formula into a suitable
multinomial expansion. We propose a {\tt MAPLE} procedure whose computational times are
faster compared with the ones existing in the literature.
Some illustrative applications are also provided.

\end{abstract}

\textsf{\textbf{keywords}: multivariate composite function, Fa\`a
di Bruno's formula, multivariate cumulant, multivariate Hermite
polynomial, classical umbral calculus}
\newline\newline\textsf{ \textsf{AMS subject classification}: 68W30, 65C60, 05A40}\newline
\section{Introduction}
The multivariate Fa\`a di Bruno's formula has been recently
addressed by the following two approaches. Combinatorial methods are
used by Costantine and Savits \cite{Const}, and (only in the
bivariate case) by Noschese and Ricci \cite{Ricci}. A
treatment based on Taylor series is proposed by Leipnik and
Pearce \cite{Leipnik}. We refer to this last paper for a detailed
list of references on this subject and for a detailed account of
its applications. We just mention the paper of Savits \cite{Savits}
for statistical applications.
A comprehensive survey of the use of univariate and multivariate
series approximation methods in statistics is given in \cite{Kolassa}.

Computing the multivariate Fa\`a di Bruno's formula by means of a
symbolic software can be done by recursively applying a chain
rule. Despite its conceptual plainess, applications
of the chain rule become impractical also for small values, because
the number of additive terms becomes awkward and the derivation of
the terms somewhat tiresome. Moreover the output is often untidy, so
further manipulations are required to simplify the result (see the example
in Appendix $2$). So a \lq\lq compressed\rq\rq version of the multivariate Fa\`a di Bruno's
formula becomes more attractive, as the multivariable dimensions
or the derivation order increase. Here, a \lq\lq compressed\rq\rq
 version of the multivariate Fa\`a di Bruno's formula is given by using
the umbral methods, introduced and developed in \cite{Dinardo1, Dinardo2,
SIAM}. These methods have been particularly suited in dealing
with topics where the composition of the formal power series plays
a crucial role, see for instance \cite{Bernoulli, StatComp} and
\cite{DiNardo0}. Therefore it
is quite natural to approach the multivariate Fa\`a di Bruno's
formula by means of these symbolic tools. The result is a new
algorithm based on a suitable generalization of the well-known
multinomial theorem:
$$(x_1 + x_2 + \cdots + x_n)^i = \sum_{k_1 + k_2 + \cdots + k_n=i}
{i \choose {k_1, k_2, \ldots, k_n}} x_1^{k_1}  x_2^{k_2} \cdots
x_n^{k_n}$$ where the indeterminates are replaced by symbolic
objects. Suitable choices of these objects give rise to an
efficient computation of the following compositions: univariate
with multivariate, multivariate with univariate, multivariate with
the same multivariate, multivariate with different multivariates
in an arbitrary number of components.

Finally, the connection between the  multivariate Fa\`a di Bruno's
formula and the multinomial theorem allows us to give a closed
form for the so-called {\it generalized Bell polynomial} introduced in
\cite{Const}. Umbral versions of the multivariate
Hermite polynomials are given as a special case of these generalized
Bell polynomials.

A {\tt MAPLE} implementation of all these formulae ends the paper.
Comparisons with existing algorithms, based on the chain rule, show
the improvement of the computational time due to the proposed approach.

\section{The umbral syntax}

Umbral methods consist essentially of a symbolic technique to deal
with sequences of numbers, indexed by nonnegative integers, where
the subscripts are treated as powers.

More formally an umbral calculus consists of a set
$A=\{\alpha,\beta, \ldots\},$ called \textit{alphabet}, whose
elements are named \textit{umbrae}, and a linear functional $E,$
called \textit{evaluation}, defined on a polynomial ring $R[A]$
and taking values in $R,$ where $R$ is a suitable ring. For the
purpose of this paper, $R$ denotes the real or complex field. The
linear functional $E$ is such that $E[1]=1$ and
\begin{equation}
E[\alpha^{i} \beta^{j} \cdots \gamma^{k}] = E[\alpha
^{i}] \, E[\beta^{j}] \cdots E[\gamma^{k}], \quad
{\hbox{(uncorrelation property)}} \label{(iii)}
\end{equation}
for any set of distinct umbrae in $A$ and for $i,j,k$
nonnegative integers. A sequence $a_{0}=1,a_{1},a_{2}, \ldots$ in
$R$ is umbrally represented by an umbra $\alpha$ when
$$E[\alpha^{n}]=a_{n}$$
for all nonnegative integers $n.$ The elements $\{a_{n}\}_{n \geq
1}$ are called \textit{moments} of the umbra $\alpha.$ Special
umbrae are
\begin{description}
\item{\it i)} the \textit{augmentation} umbra $\epsilon\in A,$
such that\footnote{Here $\delta_{i,j}$ denotes the Kronecker's
delta.} $E[\epsilon^{n}] = \delta_{0,n},$ for all nonnegative
integers $n;$ \item{\it ii)}  the \textit{unity} umbra $u \in A,$
such that $E[u^{n}]=1,$ for all nonnegative integers $n;$
\item{\it iii)} the \textit{singleton} umbra $\chi \in A$ such
that $E[\chi^{n}]=\delta_{1,n},$ for all integers $n \geq 1.$
\end{description}
Note that a sequence of moments can be umbrally represented by two or more
uncorrelated umbrae. Indeed, the umbrae $\alpha$ and $\gamma$ are
said to be {\it similar} when
$$E[\alpha^n]=E[\gamma^n], \qquad \hbox{(in symbols $\alpha \equiv
\gamma$)}$$ for all nonnegative integers $n$. An umbral polynomial
$p$ is such that $p \in R[A].$ The support of an umbral polynomial is
the set of all umbrae which occur. Two umbral polynomials $p$ and $q$
are said to be \textit{umbrally equivalent} if and only if
\begin{equation}
E[p]=E[q], \qquad \hbox{(in symbols $p\simeq q$).} \label{(ue)}
\end{equation}

The classical umbral calculus has reached a more
advanced level compared with the notions that we resume in the next paragraph. 
We only recall terminology, notation and basic definitions
strictly necessary to deal with the topic of this paper. We skip the proofs, the reader
interested in is referred to \cite{Dinardo1,
Dinardo2, SIAM}.
\subparagraph{Univariate umbral calculus.} The symbol $\gamma^{\punt n}$ 
denotes the product of $n$ uncorrelated umbrae similar to $\gamma,$ that is
$E[(\gamma^{\boldsymbol{.} \, n})^{k}] = g_{k}^{n}$ for all
nonnegative integers $k$ and $n,$ where $g_{k}=E[\gamma^k].$ 
The auxiliary umbra $\gamma^{\punt n}$ is called the \textit{$n$-th dot power}
of $\gamma.$ The summation of $n$ uncorrelated umbrae similar to $\gamma$ is
denoted by the auxiliary umbra $n \punt \gamma$ and called the \textit{dot
product} of the integer $n$ and the umbra $\gamma$. By using the
umbral equivalence (\ref{(ue)}), we are able to expand powers of $n \punt
\gamma.$ Indeed, let $\lambda=(1^{r_{1}},2^{r_{2}},\ldots)$ be an
integer partition of length $\nu_{\lambda}$ with multiplicities
$\ml(\lambda)=(r_1, r_2, \ldots).$ Set $\ml(\lambda)!=r_1! r_2!
\cdots$ and $\lambda!= (1!)^{r_{1}}(2!)^{r_{2}} \cdots.$ Then
powers of $n \punt \gamma$ verify the following umbral equivalence:
\begin{equation}
(n \punt \gamma)^{i} \simeq \sum_{\lambda\vdash i} \frac{i!}{\ml(\lambda)! \, \lambda!} (n)_{\nu_{\lambda}} \gamma_{\lambda}, \qquad
i=1,2,\ldots
\label{(eq:11)}
\end{equation}
where the summation is over all partitions $\lambda$ of the
integer $i,$ the symbol $(n)_{\nu_{\lambda}}$ denotes the lower
factorial and $\gamma_{\lambda} = (\gamma_{1})^{\punt
\,r_{1}}(\gamma_{2}^2)^{\punt \,r_{2}} \cdots,$ with $\gamma_1,
\gamma_2, \ldots$ uncorrelated umbrae similar to $\gamma.$ Since
$E[(n \punt \chi)^i] = (n)_i,$ equivalence (\ref{(eq:11)}) can be
rewritten as
\begin{equation}
(n \punt \gamma)^{i} \simeq \sum_{\lambda\vdash i} \frac{i!}{\ml(\lambda)! \, \lambda!}
(n \punt \chi)^{\nu_{\lambda}} \gamma_{\lambda}.
\label{(eq:12)}
\end{equation}
The main tool of the umbral syntax is summarized in the
following construction. By equivalence (\ref{(eq:11)}), $E[(n
\punt \gamma)^{i}]$ results to be a polynomial $q_i(n)=
\sum_{\lambda\vdash i} (n)_{\nu_{\lambda}} d_{\lambda}
E[\gamma_{\lambda}]$ of degree $i$ in $n,$ where $d_{\lambda}= i!
/ \ml(\lambda)! \lambda!.$ If the integer $n$ is replaced by an
umbra $\alpha,$ the umbral polynomial $q_i(\alpha)$ is
such that $q_i(\alpha) \simeq \sum_{\lambda\vdash i}
(\alpha)_{\nu_{\lambda}} d_{\lambda} \gamma_{\lambda}.$ We denote
by $\alpha \punt \gamma$  the auxiliary umbra such that $(\alpha
\punt \gamma)^i \simeq q_i(\alpha),$ for all
nonnegative integers $i.$ The umbra $\alpha \punt \gamma$ is
called the \textit{dot product} of $\alpha$ and $\gamma.$ Since
$(\alpha \punt \chi)^i \simeq (\alpha)_i,$ from equivalence
(\ref{(eq:12)}) we have
\begin{equation}
q_i(\alpha) \simeq \sum_{\lambda\vdash i} \frac{i!}{\ml(\lambda)! \, \lambda!}
\, (\alpha \punt \chi)^{\nu_{\lambda}} \, \gamma_{\lambda}.
\label{(eq:13)}
\end{equation}

We repeat this construction, replacing the umbra $\alpha$
in $q_i(\alpha)$ with the dot product of $\alpha$ and
$\delta,$ so we construct $(\alpha \punt \delta) \punt \gamma.$
Since $(\alpha \punt \delta) \punt \gamma \equiv \alpha \punt (\delta 
\punt \gamma),$ parenthesis can be avoided.

A noteworthy dot product is the auxiliary umbra $\alpha \punt
\beta \punt \gamma,$ where $\beta$ is the so-called Bell umbra. The moments
of the umbra $\beta$ are the Bell numbers. Indeed, since $\beta \punt \chi \equiv \chi
\punt \beta \equiv u$ then $\alpha \punt \beta \punt \chi \equiv
\alpha \punt u \equiv \alpha$ and from equivalence (\ref{(eq:13)})
we have
\begin{equation}
q_i(\alpha \punt \beta) \simeq (\alpha \punt \beta \punt
\gamma)^{i} \simeq \sum_{\lambda\vdash i} \frac{i! }
{\ml(\lambda)! \, \lambda!} \, \alpha^{\nu_{\lambda}} \, \gamma_{\lambda}. \label{(eq:14)}
\end{equation}
\begin{remark} \label{mf}
{\rm Equivalence (\ref{(eq:14)}) is the umbral version of the univariate Fa\`a
di Bruno's formula. In particular, let $\{a_n\},\{g_n\},\{h_n\}$ denote respectively the moments of
$\alpha, \gamma$ and $\alpha \punt \beta \punt \gamma.$ If we consider
the formal power series
$$f(\alpha,t) = 1 + \sum_{n=1}^{\infty} a_n \frac{t^n}{n!}, \,\,
f(\gamma,t) = 1 + \sum_{n=1}^{\infty} g_n \frac{t^n}{n!}, \,\,
f(\alpha \punt \beta \punt \gamma,t) = 1 + \sum_{n=1}^{\infty} h_n \frac{t^n}{n!},$$
then we have
\begin{equation}
f(\alpha \punt \beta \punt \gamma,t) = f[\alpha, f(\gamma,t)-1].
\label{faasing}
\end{equation}
So $E[(\alpha \punt \beta \punt \gamma)^n]$ is the $n$-th
coefficient of $f[\alpha, f(\gamma,t)-1].$ This is why the dot
product $\alpha \punt \beta \punt \gamma$ is called the
\textit{composition umbra} of $\alpha$ and $\gamma.$}
\end{remark}
\subparagraph{Multivariate umbral calculus.}
In the univariate classical umbral calculus, the main device is to
replace $a_n$ with $\alpha^n$ via the linear evaluation $E.$
Similarly, in the multivariate case, the main device is to replace
sequences like $\{g_{i_1, i_2, \ldots, i_n}\},$ where $(i_1, i_2,
\ldots, i_n) \in \mathbb{N}_0^n$ is a {\it multi-index}, with
a product of powers $\mu_1^{i_1} \mu_2^{i_2} \cdots \mu_n^{i_n},$
where $\{\mu_1, \mu_2, \ldots, \mu_n\}$ are umbral monomials in $R[A].$ Note
that the supports of the umbral monomials in $\{\mu_1, \mu_2, \ldots, \mu_n\}$ are not
necessarily disjoint. In order to manage a product like
$\mu_1^{i_1} \mu_2^{i_2} \cdots \mu_n^{i_n},$ as a power of 
an umbra, we will use a multi-index notation.  Let $\ibs \in
{\mathbb N}^n_0.$ We set $\ibs! = i_1! i_2! \cdots i_n!$ and
$(\mu_1, \mu_2, \ldots, \mu_n)^{\ibs} = \mu_1^{i_1} \mu_2^{i_2}
\cdots \mu_n^{i_n},$ where $\mubs=(\mu_1, \mu_2, \ldots,\mu_n)$
denotes a $n$-tuple of umbral monomials. We define the $\ibs$-th
power of $\mubs$ as follows $\mubs^{\ibs} = \mu_1^{i_1} \mu_2^{i_2}
\cdots \mu_n^{i_n}.$

A sequence $\{g_{\ibs}\}_{\ibs \in \mathbb{N}_0^n} \in R,$ with
$g_{\ibs} = g_{i_1, i_2, \ldots, i_n}$ and $g_{\bf 0} = 1,$ is
umbrally represented by the $n$-tuple $\mubs$ when
$$E[\mubs^{\ibs}] = g_{\ibs},$$
for all $\ibs \in \mathbb{N}_0^n.$ The elements
$\{g_{\ibs}\}_{\ibs \in \mathbb{N}_0^n}$ are called {\it
multivariate moments} of $\mubs.$ Two $n$-tuples $\nubs$ and
$\mubs$ of umbral monomials are said to be similar, if they
represent the same sequence of multivariate moments, in symbols
$\nubs \equiv \mubs.$ If for all $\ibs, \jbs \in \mathbb{N}_0^n,$
we have $E[\nubs^{\ibs} \mubs^{\jbs}] = E[\nubs^{\ibs}]
E[\mubs^{\jbs}],$ then $\nubs$ and $\mubs$ are said to be
uncorrelated.

In the univariate case we have defined the auxiliary umbra $n
\punt \gamma$ and the composition umbra $\alpha \punt
\beta \punt \gamma.$ We follow the same steps in the multivariate case.

Let $\{\mubs^{\prime}, \mubs^{\prime \prime}, \ldots,
\mubs^{\prime \prime \prime}\}$ be a set of $m$ uncorrelated
$n$-tuples similar to $\mubs.$ Define the {\it dot product} of $m$
and $\mubs$ as the auxiliary umbra $m \punt \mubs = \mubs^{\prime}
+ \mubs^{\prime \prime} + \cdots + \mubs^{\prime \prime \prime}$
and the $m$-th {\ \it dot power} of $\mubs$ as the auxiliary umbra
$\mubs^{\punt m} = \mubs^{\prime} \mubs^{\prime \prime} \cdots
\mubs^{\prime \prime \prime}.$

In the following we give an expansion of the $\ibs$-th power of $m
\punt \mubs$ in terms of \textit{partitions of a multi-index},
see Definition \ref{partitiondef}. To this aim, we introduce the
notion of {\it generating function} of the $n$-tuple $\mubs.$ The
exponential multivariate formal power series
\begin{equation}
e^{\mubs \, \tbs^T} = e^{\mu_1 t_1 + \mu_2 t_2 + \cdots + \mu_n
t_n} = u + \sum_{k=1}^{\infty} \, \sum_{\tworow{\ibs \in {\mathbb
N}^n_0}{|\ibs|=k}} \mubs^{\ibs}\frac{\tbs^{\ibs}}{\ibs!}
\label{(gf)}
\end{equation}
is said to be the generating function of the $n$-tuple $\mubs,$
where $\tbs = (t_1, t_2, \ldots, t_n)$ and $\tbs^T$ denotes its transpose.
Now, assume $\{g_{\ibs}\}_{\ibs \in \mathbb{N}_0^n}$ umbrally
represented by the $n$-tuple $\mubs.$ If the sequence
$\{g_{\ibs}\}_{\ibs \in \mathbb{N}_0^n}$ has exponential
multivariate generating function
$$
f(\mubs, \tbs) = 1 + \sum_{k=1}^{\infty} \,  \sum_{\tworow{\ibs
\in {\mathbb N}^n_0}{|\ibs|=k}} g_{\ibs}\frac{\tbs^{\ibs}}{\ibs!},
$$
by suitably extending the action of $E$ coefficientwise to
generating functions (\ref{(gf)}), we have $E[e^{\mubs \,
\tbs^T}]=f(\mubs, \tbs).$ Henceforth, when no confusion occurs, we
refer to $f(\mubs, \tbs)$ as the generating function of the
$n$-tuple $\mubs.$

\begin{proposition} \label{uncorrprop}
If $\{\mu_1, \ldots, \mu_n\}$ are uncorrelated umbral monomials,
then $\mubs \equiv \tilde{\mubs}_1 +  \cdots + \tilde{\mubs}_n,$ where
the vectors $\{ \tilde{\mubs}_1,  \ldots,  \tilde{\mubs}_n\}$ 
are uncorrelated and such that $\tilde{\mubs}_i=(\varepsilon, \ldots, \mu_i, 
\ldots, \varepsilon)$ is obtained from $\mubs$ by replacing
each umbral monomial with $\varepsilon,$ except the $i$-th one.
\end{proposition}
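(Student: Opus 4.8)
The statement has two parts: that $\tilde{\mubs}_1,\dots,\tilde{\mubs}_n$ are mutually uncorrelated, and that $\tilde{\mubs}_1+\cdots+\tilde{\mubs}_n$ and $\mubs$ represent the same multivariate moments. The plan is to reduce both to the uncorrelation property of $E$ together with the elementary fact that the augmentation umbra $\varepsilon$ may be chosen outside the supports of $\mu_1,\dots,\mu_n$; combined with $E[\varepsilon^k]=\delta_{0,k}$, this gives $E[\varepsilon^k\,p]=\delta_{0,k}\,E[p]$ for every umbral polynomial $p$ in the umbrae occurring in $\mu_1,\dots,\mu_n$, so that any umbral monomial carrying a strictly positive power of $\varepsilon$ evaluates to zero.

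I would first prove the similarity. Since $n$-tuples are added componentwise and the $j$-th component of $\tilde{\mubs}_i$ is $\mu_j$ when $i=j$ and $\varepsilon$ when $i\neq j$, the $j$-th component of $\tilde{\mubs}_1+\cdots+\tilde{\mubs}_n$ is $\mu_j+(n-1)\varepsilon$. Hence, for every $\ibs=(i_1,\dots,i_n)\in\mathbb{N}_0^n$,
$$\bigl(\tilde{\mubs}_1+\cdots+\tilde{\mubs}_n\bigr)^{\ibs}=\prod_{j=1}^n\bigl(\mu_j+(n-1)\,\varepsilon\bigr)^{i_j}.$$
Expanding each factor by the binomial theorem, applying the linear functional $E$ term by term, and using that $E$ splits as a product over $\mu_1,\dots,\mu_n$ (uncorrelation hypothesis) and over $\varepsilon$ ($\varepsilon$ lies outside their supports), every summand that contains a positive power of $\varepsilon$ vanishes, so the only surviving contribution is $E[\mu_1^{i_1}\cdots\mu_n^{i_n}]$. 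By the uncorrelation hypothesis this equals $\prod_{j=1}^n E[\mu_j^{i_j}]=E[\mubs^{\ibs}]$. As $\ibs$ is arbitrary, $\tilde{\mubs}_1+\cdots+\tilde{\mubs}_n\equiv\mubs$.

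For the uncorrelation, I would fix multi-indices $\kbs_1,\dots,\kbs_n\in\mathbb{N}_0^n$ with $\kbs_r=(k_{r,1},\dots,k_{r,n})$ and compare the two sides of $E\bigl[\prod_{r=1}^n\tilde{\mubs}_r^{\kbs_r}\bigr]=\prod_{r=1}^n E\bigl[\tilde{\mubs}_r^{\kbs_r}\bigr]$. Because the only non-augmentation entry of $\tilde{\mubs}_r$ is $\mu_r$, in position $r$, one has $\tilde{\mubs}_r^{\kbs_r}=\mu_r^{k_{r,r}}\,\varepsilon^{q_r}$ with $q_r=\sum_{l\neq r}k_{r,l}$, whence $\prod_{r=1}^n\tilde{\mubs}_r^{\kbs_r}=\bigl(\prod_{r=1}^n\mu_r^{k_{r,r}}\bigr)\,\varepsilon^{q_1+\cdots+q_n}$. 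Evaluating with the splitting of $E$ recalled above, the left-hand side equals $\bigl(\prod_{r=1}^n E[\mu_r^{k_{r,r}}]\bigr)\,\delta_{0,\,q_1+\cdots+q_n}$ and the right-hand side equals $\prod_{r=1}^n\bigl(E[\mu_r^{k_{r,r}}]\,\delta_{0,q_r}\bigr)$; since the $q_r$ are nonnegative, both expressions vanish unless every $q_r=0$ and agree otherwise. Hence the $\tilde{\mubs}_r$ are mutually uncorrelated.

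The computations are routine; the only delicate point is conceptual rather than technical: similarity of $n$-tuples is by definition a statement about all the mixed products $\mubs^{\ibs}$ at once, so it cannot be read off component by component, and one must genuinely check that the augmentation entries drop out of the entire product $\prod_{j}(\mu_j+(n-1)\varepsilon)^{i_j}$ — which is precisely what the uncorrelation property combined with $E[\varepsilon^k]=\delta_{0,k}$ delivers.
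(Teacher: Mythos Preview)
Your proof is correct, but it follows a different route from the paper. The paper argues entirely at the level of generating functions: using the uncorrelation hypothesis it factorises $f(\mubs,\tbs)=E[e^{\mu_1 t_1+\cdots+\mu_n t_n}]=\prod_{i=1}^n E[e^{\mu_i t_i}]=\prod_{i=1}^n f(\mu_i,t_i)$, and then observes that $f(\mu_i,t_i)=f(\tilde{\mubs}_i,\tbs)$ for each $i$, so that $f(\mubs,\tbs)$ equals the product of the $f(\tilde{\mubs}_i,\tbs)$, which is exactly the generating function of the sum of uncorrelated $n$-tuples. Your argument instead works directly with moments: you expand $(\tilde{\mubs}_1+\cdots+\tilde{\mubs}_n)^{\ibs}=\prod_j(\mu_j+(n-1)\varepsilon)^{i_j}$, apply $E$ term by term, and use $E[\varepsilon^k]=\delta_{0,k}$ together with uncorrelation to kill all terms except $E[\mubs^{\ibs}]$. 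The generating-function proof is shorter and more conceptual once the machinery is in place; your moment computation is more elementary and, notably, it explicitly verifies the mutual uncorrelation of the $\tilde{\mubs}_i$, a point the paper's proof leaves implicit.
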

\begin{proof}
From (\ref{(gf)}), we have $f(\mubs, \tbs) = E[e^{\mubs \, \tbs^T}] = E[e^{\mu_1 t_1 + \mu_2 t_2 + \cdots + \mu_n
t_n}] = \prod_{i=1}^n E[e^{\mu_i t_i}]= \prod_{i=1}^n  f(\mu_i, t_i).$ The result follows
by observing that $f(\mu_i, t_i) = f(\tilde{\mubs}_i, \tbs)$ for $i=1,2,\cdots,n.$
\end{proof}

In order to compute the coefficients of $f(m \punt \mubs, \tbs),$
we first introduce the notion of \textit{composition of a
multi-index} and then the notion of \textit{partition of a
multi-index}.

\begin{definition} [Composition of a multi-index] A composition $\lambdabs$
of a multi-index $\ibs,$ in symbols $\lambdabs \models \ibs,$ is a
matrix $\lambdabs = (\lambda_{ij})$ of nonnegative integers and
with no zero columns such that
$\lambda_{r1}+\lambda_{r2}+\cdots+\lambda_{rk}=i_r$ for
$r=1,2,\ldots,n.$
\end{definition}
The number of columns of $\lambdabs$ is called the length of $\lambdabs$
and denoted by $l(\lambdabs).$
\begin{definition} [Partition of a multi-index] \label{partitiondef} A partition of
a multi-index $\ibs$ is a composition $\lambdabs,$ whose columns
are in lexicographic order, in symbols $\lambdabs \mmodels \ibs.$
\end{definition}
Just as it is for integer partitions, the notation $\lambdabs = (\lambdabs_{1}^{r_1}, \lambdabs_{2}^{r_2}, \ldots)$
means that in the matrix $\lambdabs$ there are $r_1$ columns equal to $\lambdabs_{1},$
$r_2$ columns equal to $\lambdabs_{2}$ and so on, with $\lambdabs_{1} <
\lambdabs_{2} < \cdots.$ The integer $r_i$ is the multiplicity of $\lambdabs_i.$ We set
$\ml(\lambdabs)=(r_1, r_2,\ldots).$
\begin{proposition} \label{aa} Let $\mubs$ be a $n$-tuple of umbral monomials. 
For $\ibs \in {\mathbb N}^n_0$ we have
\begin{equation}
(m \punt \mubs)^{\ibs} \simeq \sum_{\lambdabs \mmodels \ibs}
\frac{\ibs!}{\ml(\lambdabs)! \,  \lambdabs!} (m \punt
\chi)^{l(\lambdabs)} \mubs_{\lambdabs}, \label{(eq:15)}
\end{equation}
where the sum is over all partitions $\lambdabs$ of the multi-index $\ibs,$ and
$\mubs_{\lambdabs} = ({\mubs^{\prime}}^{\lambdabs_1})^{\punt
\,r_{1}}({\mubs^{\prime \prime}}^{\lambdabs_2})^{\punt \,r_{2}} \cdots,$
with ${\mubs^{\prime}}, {\mubs^{\prime \prime}}, \ldots$ uncorrelated $n$-tuples similar to $\mubs.$
\end{proposition}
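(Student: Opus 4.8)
The plan is to reduce the multivariate statement (\ref{(eq:15)}) to the univariate dot-product expansion (\ref{(eq:11)})--(\ref{(eq:12)}), applied to the single ``super-umbra'' obtained by bundling together the $n$ coordinates, and then to bookkeep the combinatorial factors by translating partitions of the multi-index $\ibs$ into the appropriate multiplicities. First I would observe that the generating-function machinery already in place gives a clean starting point: by the definition of $m\punt\mubs = \mubs'+\cdots+\mubs'''$ as a sum of $m$ uncorrelated copies of $\mubs$, and by the definition of the generating function in (\ref{(gf)}), one has $f(m\punt\mubs,\tbs) = f(\mubs,\tbs)^m = (1 + \sum_{k\ge 1}\sum_{|\ibs|=k} g_{\ibs}\tbs^{\ibs}/\ibs!)^m$. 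Thus $E[(m\punt\mubs)^{\ibs}]$ is $\ibs!$ times the coefficient of $\tbs^{\ibs}$ in this $m$-th power, and extracting that coefficient by the (multivariate) multinomial theorem produces exactly a sum over ways of distributing the derivative order $\ibs$ among the $m$ factors.

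Concretely, the coefficient extraction yields a sum over tuples $(\jbs_1,\dots,\jbs_m)$ of multi-indices with $\jbs_1+\cdots+\jbs_m=\ibs$, of $\prod_s g_{\jbs_s}/\jbs_s!$ times the multinomial coefficient counting the factors; dropping the $\jbs_s=\mathbf 0$ entries turns this into a sum over the \emph{nonzero} parts, i.e. over compositions $\lambdabs\models\ibs$ weighted by $(m)_{l(\lambdabs)}$ (the falling factorial, since the $m$ factors are distinguishable but the zero parts are not), together with the factor $1/\ml(\lambdabs)!$ absorbing the permutations among equal columns once we pass from compositions to partitions $\lambdabs\mmodels\ibs$. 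This is the precise multivariate analogue of the passage from (\ref{(eq:11)}) to the combinatorial coefficient $i!/(\ml(\lambda)!\,\lambda!)$; here $\ibs!/\lambdabs!$ replaces $i!/\lambda!$, where $\lambdabs! = \prod_j (\lambdabs_j!)^{r_j}$ in the evident multi-index sense, and $\ml(\lambdabs)!$ replaces $\ml(\lambda)!$. Matching the umbral object $\mubs_{\lambdabs} = ({\mubs'}^{\lambdabs_1})^{\punt r_1}({\mubs''}^{\lambdabs_2})^{\punt r_2}\cdots$ to $\prod_s g_{\jbs_s}$ is then just the observation that a dot power $(\mubs^{\lambdabs_j})^{\punt r_j}$ has moment $(g_{\lambdabs_j})^{r_j}$, exactly as in the univariate $\gamma_\lambda$.

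Finally, to upgrade the moment identity $E[(m\punt\mubs)^{\ibs}] = \sum_{\lambdabs\mmodels\ibs}\ibs!/(\ml(\lambdabs)!\,\lambdabs!)\,(m)_{l(\lambdabs)}\,E[\mubs_{\lambdabs}]$ to the umbral equivalence (\ref{(eq:15)}), I would invoke $E[(m\punt\chi)^{l}] = (m)_l$ — the multivariate singleton behaves coordinatewise like the univariate one since $\chi$ appears in the scalar multiplicity only — so that $(m\punt\chi)^{l(\lambdabs)}\mubs_{\lambdabs}$ has the right moment and the identity holds after applying $E$ to both sides of the claimed equivalence. The main obstacle I anticipate is purely bookkeeping rather than conceptual: getting the three combinatorial factors $\ibs!$, $\ml(\lambdabs)!$, and $\lambdabs!$ to line up correctly when translating from ordered tuples $(\jbs_1,\dots,\jbs_m)$ (which is what the multinomial expansion naturally produces) to unordered partitions with multiplicities — in particular, being careful that the $1/\ml(\lambdabs)!$ arises from overcounting equal nonzero columns while the $(m)_{l(\lambdabs)}$ arises from selecting which of the $m$ distinguishable factors carry the nonzero parts. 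Once that accounting is pinned down, the argument is a direct lift of the univariate proof of (\ref{(eq:11)}).
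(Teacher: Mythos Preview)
Your proposal is correct and follows essentially the same route as the paper's own proof: both start from the generating-function identity $f(m\punt\mubs,\tbs)=[f(\mubs,\tbs)]^m$, extract the $\tbs^{\ibs}$ coefficient as a sum over compositions of $\ibs$, collapse compositions to partitions with the factor $l(\lambdabs)!/\ml(\lambdabs)!$, and then identify $(m)_{l(\lambdabs)}$ with $E[(m\punt\chi)^{l(\lambdabs)}]$ and $g_{\lambdabs}$ with $E[\mubs_{\lambdabs}]$. The only cosmetic difference is that the paper first writes $[1+(f-1)]^m=\sum_k\binom{m}{k}(f-1)^k$ and then expands $(f-1)^k$ over nonzero tuples, whereas you expand the $m$-fold product directly over all $m$-tuples and then drop the zero parts; the bookkeeping you flag as the ``main obstacle'' is exactly what the paper handles in passing from its equation (\ref{eq:16}) to (\ref{eq:18}).
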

\begin{proof}
Due to the uncorrelation property, we have $f(m \punt \mubs, \tbs)=[f(\mubs, \tbs)]^m$ and $
\{[f(\mubs, \tbs) - 1] + 1\}^m = 1 + \sum_{k=1}^m {m \choose k}[f(\mubs, \tbs)-1]^k.$ Moreover, we have
$$[f(\mubs, \tbs)-1]^k = \sum_{(\pibs_{1},\ldots, \pibs_{k})} \frac{g_{\pibs_{1}}}{\pibs_{1}!} \cdots \frac{g_{\pibs_{k}}}{\pibs_{k}!} \, \tbs^{\pibs_{1}} \cdots \tbs^{ \pibs_{k}}$$
where the sum is over all vectors $\{\pibs_{1},\ldots, \pibs_{k}\}
\in {\mathbb N}^n_0 \setminus \{\bf 0\} .$ If we denote by $\pibs$
the multi-index composition $(\pibs_{1},\ldots, \pibs_{k}) \models
\ibs,$ and we set $g_{\pibs} = g_{\pibs_{1}} \cdots g_{\pibs_{k}}$
and ${\ibs \choose \pibs} = \frac{\ibs!}{\pibs!},$ then we have
\begin{equation}
[f(\mubs, \tbs)-1]^k = \sum_{\ibs > 0} \left[ \sum_{\tworow{\pibs
\models \ibs}{l(\pibs)=k}} {\ibs \choose \pibs} g_{\pibs} \right]
\frac{{\tbs}^{\ibs}}{\ibs!}. \label{eq:16bis}
\end{equation}
Replacing  (\ref{eq:16bis}) in $f(m \punt \mubs, \tbs) =  1 + \sum_{k=1}^m \frac{(m)_k}{k!} [f(\mubs, \tbs)-1]^k,$ we have
\begin{equation}
f(m \punt \mubs, \tbs) =  1 + \sum_{\ibs > 0} \left[ \sum_{{\pibs
\models \ibs}} {\ibs \choose \pibs}
\frac{(m)_{l(\pibs)}}{l(\pibs)!} g_{\pibs} \right]
\frac{\tbs^{\ibs}}{\ibs!}. \label{eq:16}
\end{equation}
Observe that for each $\pibs \models \ibs,$ any other composition
$\taubs \models \ibs,$ obtained by permuting the columns of
$\pibs,$ is such that $g_{\pibs}=g_{\taubs}.$ If $\pibs =
(\cbs_1^{r_1}, \cbs_2^{r_2}, \ldots)$ then the number of distinct
permutations of $\pibs$ are $l(\pibs)!/\ml(\pibs)!.$ So indexing
the last summation in (\ref{eq:16}) by partitions $\lambdabs$ of
$\ibs,$ instead of compositions $\pibs$, we have
\begin{equation}
f(m \punt \mubs, \tbs) = 1 + \sum_{\ibs > 0} \left[
\sum_{\lambdabs \mmodels \ibs}
\frac{(m)_{l(\lambdabs)}}{\ml(\lambdabs)! \, \lambdabs!} \,
g_{\lambdabs} \right] \frac{\tbs^{\ibs}}{\ibs!}. \label{eq:18}
\end{equation}
Recalling that $E[(m \punt \chi)^{l(\lambdabs)}]=(m)_{l(\lambdabs)}$ and
$E[\mubs_{\lambdabs}]=g_{\lambdabs},$ equivalence (\ref{(eq:15)}) follows.
\end{proof}
\section{Multivariate Fa\`a di Bruno's formula}
We start by introducing the auxiliary umbra $\alpha \punt \beta
\punt \mubs.$ We define
\begin{equation}
(\alpha \punt \beta \punt \mubs)^{\ibs} \simeq \sum_{\lambdabs
\mmodels \ibs} \frac{\ibs!}{\ml(\lambdabs)! \, \lambdabs!} \,
\alpha^{l(\lambdabs)} \mubs_{\lambdabs}, \label{eq:17}
\end{equation}
where $\beta$ is the Bell umbra and $\alpha \in A.$
\begin{theorem}[Univariate composite Multivariate]  \label{mf1} Let
$\mubs$ be a $n$-tuple of umbral
monomials with generating function $f(\mubs,\tbs).$ The auxiliary umbra $\alpha \punt \beta \punt \mubs$
has generating function $f(\alpha \punt \beta \punt \mubs,\tbs)=f[\alpha, f(\mubs,\tbs)-1].$
\end{theorem}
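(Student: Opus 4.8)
The plan is to mimic the proof of Proposition \ref{aa} with the integer $m$ replaced by the umbra $\alpha$: I would compute $f(\alpha\punt\beta\punt\mubs,\tbs)$ coefficientwise from the defining equivalence (\ref{eq:17}), separately expand the composition $f[\alpha,f(\mubs,\tbs)-1]$ as a formal power series in $\tbs$, and check that the two series agree. The point is that, in passing from $m$ to $\alpha$, the lower factorials $(m)_k=E[(m\punt\chi)^k]$ that govern (\ref{eq:15})--(\ref{eq:18}) are replaced by the moments $a_k=E[\alpha^k]$, and these are exactly the coefficients of $f(\alpha,s)-1=\sum_{k\ge1}a_k s^k/k!$ whose substitution at $s=f(\mubs,\tbs)-1$ produces the right-hand side of the statement.

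First I would apply the evaluation $E$ to (\ref{eq:17}): by the uncorrelation property $E[\alpha^{l(\lambdabs)}\mubs_{\lambdabs}]=a_{l(\lambdabs)}\,g_{\lambdabs}$, where $g_{\lambdabs}=E[\mubs_{\lambdabs}]$, so
$$f(\alpha\punt\beta\punt\mubs,\tbs)=1+\sum_{\ibs>0}\Big[\sum_{\lambdabs\mmodels\ibs}\frac{\ibs!}{\ml(\lambdabs)!\,\lambdabs!}\,a_{l(\lambdabs)}\,g_{\lambdabs}\Big]\frac{\tbs^{\ibs}}{\ibs!}.$$
Next I would expand the right-hand side of the statement. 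Since $g_{\bf 0}=1$, the series $f(\mubs,\tbs)-1$ has no constant term, so $f[\alpha,f(\mubs,\tbs)-1]=1+\sum_{k\ge1}\frac{a_k}{k!}[f(\mubs,\tbs)-1]^k$ is a well-defined formal power series, because $[f(\mubs,\tbs)-1]^k$ has no monomial of total degree less than $k$ and hence only finitely many $k$ (those with $k\le|\ibs|$) contribute to the coefficient of a given $\tbs^{\ibs}$. Substituting the expansion (\ref{eq:16bis}) of $[f(\mubs,\tbs)-1]^k$ and interchanging the summations (legitimate by the preceding remark) yields
$$f[\alpha,f(\mubs,\tbs)-1]=1+\sum_{\ibs>0}\Big[\sum_{\pibs\models\ibs}{\ibs\choose\pibs}\frac{a_{l(\pibs)}}{l(\pibs)!}\,g_{\pibs}\Big]\frac{\tbs^{\ibs}}{\ibs!},$$
which is precisely formula (\ref{eq:16}) with every $(m)_{l(\pibs)}$ replaced by $a_{l(\pibs)}$, the replacement being forced by the passage from $\sum_k{m\choose k}[\,\cdot\,]^k$ to $\sum_k\frac{a_k}{k!}[\,\cdot\,]^k$. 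Finally, the same reduction from compositions to partitions of a multi-index used to go from (\ref{eq:16}) to (\ref{eq:18}) --- a partition $\lambdabs$ arises from $l(\lambdabs)!/\ml(\lambdabs)!$ compositions, all with the same columns, so all with $g_{\pibs}=g_{\lambdabs}$ and $\pibs!=\lambdabs!$ --- turns the last display into the series obtained above for $f(\alpha\punt\beta\punt\mubs,\tbs)$, which proves the identity.

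No step is a genuine obstacle: the multi-index bookkeeping is literally that of Proposition \ref{aa}, and the one point worth stating explicitly is the legitimacy of rearranging the double sum, which the finite-support observation settles. The same computation can be organized more conceptually, too: (\ref{eq:15}) exhibits $E[(m\punt\mubs)^{\ibs}]$ as a polynomial in $m$ through the lower factorials $(m)_{l(\lambdabs)}$, and replacing $m$ by the umbra $\alpha\punt\beta$ --- using $(\alpha\punt\beta\punt\chi)^{k}\simeq(\alpha\punt\beta)_{k}\simeq\alpha^{k}$, valid since $\beta\punt\chi\equiv\chi\punt\beta\equiv u$, exactly as in the derivation of (\ref{(eq:14)}) --- identifies $\alpha\punt\beta\punt\mubs$ with the umbra defined by (\ref{eq:17}); in this guise the theorem is the multivariate analogue of the univariate Fa\`a di Bruno equivalence (\ref{(eq:14)}) of Remark \ref{mf}.
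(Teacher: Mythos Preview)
Your proof is correct and follows essentially the same route as the paper's: both evaluate the defining equivalence (\ref{eq:17}) to obtain the partition-indexed expression for $f(\alpha\punt\beta\punt\mubs,\tbs)$, then use the composition/partition bookkeeping from the proof of Proposition \ref{aa} together with (\ref{eq:16bis}) to identify this with $f[\alpha,f(\mubs,\tbs)-1]$. The only cosmetic difference is that the paper runs the chain of equalities from one side to the other (``taking backward''), whereas you expand both sides independently and match; your added remark on formal-series well-definedness and the conceptual reformulation via $(\alpha\punt\beta\punt\chi)^k\simeq\alpha^k$ are nice clarifications not spelled out in the paper.
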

\begin{proof}
The proof follows the same path outlined in the proof of Proposition \ref{aa}, by taking backward.
Indeed if we set $E[\alpha^{l(\lambdabs)}]=a_{l(\lambdabs)}$ and
$E[\mubs_{\lambdabs}]=g_{\lambdabs},$  from (\ref{eq:18}) and equivalence (\ref{eq:17}), 
the result follows taking into account (\ref{eq:16bis})
and (\ref{eq:16}) and also by observing that
\begin{eqnarray*}
  f(\alpha \punt \beta \punt \mubs, \tbs) & = & 1 +  \sum_{\ibs >
0} \left[ \sum_{\lambdabs \mmodels \ibs}
\frac{a_{l(\lambdabs)}}{\lambdabs! \ml(\lambdabs)!} g_{\lambdabs} \right] \frac{\tbs^{\ibs}}{\ibs!}
 =  1 + \sum_{k \geq 1} \frac{a_k}{k!}\sum_{\ibs > 0} \left[
\sum_{\tworow{\pibs \mmodels \ibs}{l(\pibs)=k}} {\ibs \choose
\pibs} g_{\pibs} \right] \frac{{\tbs}^{\ibs}}{\ibs!} \\
& = &  1 + \sum_{k \geq 1} \frac{a_k}{k!} [f(\mubs, \tbs)-1]^k = f[\alpha, f(\mubs,\tbs)-1].
\end{eqnarray*}
\end{proof}
Theorem \ref{mf1} and equivalence (\ref{eq:17}) 
generalize equation (\ref{faasing}) and equivalence
(\ref{(eq:14)}) respectively. Indeed the generating function
$f(\alpha \punt \beta \punt \mubs,\tbs)$ is the
composition of a univariate formal power series with a
multivariate formal power series and equivalence (\ref{eq:17})
gives its $\ibs$-th coefficient.

The following theorem characterizes the auxiliary umbra, whose
generating function is the composition of a multivariate formal
power series and a univariate formal power series.

\begin{theorem}[Multivariate composite Univariate] \label{mf2}  Let
$\mubs=(\mu_1, \ldots, \mu_n)$ be a $n$-tuple of umbral
monomials with generating function $f(\mubs,\tbs).$
The auxiliary umbra $(\mu_1 + \cdots + \mu_n) \punt \beta
\punt \gamma$ has generating function
\begin{equation}
f[(\mu_1 + \cdots + \mu_n) \punt \beta \punt \gamma, t]= f[\mubs,
(f(\gamma,t)-1, \ldots, f(\gamma,t)-1)]. \label{(mff2)}
\end{equation}
\end{theorem}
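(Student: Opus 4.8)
The plan is to deduce this multivariate statement from the univariate composition formula $(\ref{faasing})$ of Remark \ref{mf}, applied not to an umbra but to the single umbral polynomial $\delta := \mu_1 + \mu_2 + \cdots + \mu_n.$ By associativity of the dot product, $(\mu_1 + \cdots + \mu_n) \punt \beta \punt \gamma = \delta \punt \beta \punt \gamma,$ so it is enough to identify the generating function of $\delta \punt \beta \punt \gamma$ and to express it through $f(\mubs, \tbs).$

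First I would compute $f(\delta, s).$ Since $\delta$ is an umbral polynomial, extending $E$ coefficientwise to $(\ref{(gf)})$ gives $f(\delta, s) = E[e^{\delta s}] = E[e^{\mu_1 s + \mu_2 s + \cdots + \mu_n s}] = f(\mubs, (s, s, \ldots, s)),$ that is, $f(\delta, s)$ is the restriction of the multivariate generating function $f(\mubs, \tbs)$ to the diagonal $t_1 = t_2 = \cdots = t_n = s.$ Equivalently, $E[\delta^k]$ equals the multinomial expansion $\sum_{|\ibs| = k} {k \choose \ibs} g_{\ibs}$ of $(\mu_1 + \cdots + \mu_n)^k,$ where $g_{\ibs} = E[\mubs^{\ibs}].$

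Next I would argue that the umbral Fa\`a di Bruno identity $(\ref{(eq:14)})$ holds verbatim with $\delta$ in the role of $\alpha,$ giving $f(\delta \punt \beta \punt \gamma, t) = f[\delta, f(\gamma, t) - 1].$ The key observation is that the chain of equivalences producing $(\ref{(eq:14)})$ from $(\ref{(eq:13)})$ uses only that $q_i(\cdot)$ is a polynomial of degree $i$ (so that $q_i(\delta)$ and $q_i(\delta \punt \beta)$ are well-defined auxiliary umbrae) together with $\beta \punt \chi \equiv u$ and $\delta \punt u \equiv \delta$; none of these requires the left factor to be a single umbra. Hence $(\delta \punt \beta \punt \gamma)^{i} \simeq \sum_{\lambda \vdash i} \frac{i!}{\ml(\lambda)!\,\lambda!}\,\delta^{\nu_{\lambda}} \gamma_{\lambda},$ and applying $E$ (using that the support of $\delta$ is uncorrelated with $\gamma$) and regrouping by the number of parts $k = \nu_{\lambda},$ exactly as in the proof of Theorem \ref{mf1}, yields $f(\delta \punt \beta \punt \gamma, t) = 1 + \sum_{k \geq 1} \frac{E[\delta^k]}{k!}[f(\gamma, t) - 1]^k = f[\delta, f(\gamma, t) - 1].$ Evaluating the diagonal identity of the previous paragraph at $s = f(\gamma, t) - 1$ then gives $f[\delta, f(\gamma, t) - 1] = f[\mubs, (f(\gamma, t) - 1, \ldots, f(\gamma, t) - 1)],$ which is $(\ref{(mff2)}).$

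The step I expect to be the main obstacle is the legitimisation of the passage from an umbra to the umbral polynomial $\delta,$ together with the bookkeeping of correlations. Because the $\mu_i$ are \emph{not} assumed uncorrelated, $E[\delta^k]$ is a genuine mixed combination of the $g_{\ibs}$ and not a product of univariate moments, so one must check that the auxiliary umbrae $\delta \punt \chi,$ $\delta \punt \beta$ and $\delta \punt \beta \punt \gamma$ are available for umbral polynomials and that the identities $\delta \punt u \equiv \delta$ and $(\delta \punt \chi)^{i} \simeq (\delta)_{i}$ still hold in that generality. Once this is in place, the correlations among the $\mu_i$ are automatically carried along by the diagonal restriction $f(\mubs, (s, \ldots, s)),$ and the remainder of the argument is the rearrangement already performed for Theorem \ref{mf1}.
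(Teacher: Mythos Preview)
Your approach is essentially the same as the paper's: apply the univariate composition formula of Remark~\ref{mf} with the umbral polynomial $\delta=\mu_1+\cdots+\mu_n$ in place of $\alpha$, and then recognise $f(\delta,s)=E[e^{\delta s}]=E[e^{\mu_1 s+\cdots+\mu_n s}]=f(\mubs,(s,\ldots,s))$ evaluated at $s=f(\gamma,t)-1$. The paper carries this out in two lines, writing $f[(\mu_1+\cdots+\mu_n)\punt\beta\punt\gamma,t]=E\{\exp[(\mu_1+\cdots+\mu_n)(f(\gamma,t)-1)]\}$ directly from Remark~\ref{mf} and then invoking $(\ref{(gf)})$; your version is more explicit about the legitimisation step (extending the dot-product constructions to umbral polynomials), which the paper simply takes for granted.
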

\begin{proof}
From Remark \ref{mf}, we have $f[(\mu_1 + \cdots + \mu_n) \punt
\beta \punt \gamma, t] = E\{ \exp[ (\mu_1 + \cdots + \mu_n)
(f(\gamma,t)-1)]\}= E\{ \exp (\mu_1 [f(\gamma,t)-1] + \cdots +
\mu_n [f(\gamma,t)-1])\}.$ Hence, the result follows from
(\ref{(gf)}).
\end{proof}
The $i$-th coefficient of $f[(\mu_1 + \cdots + \mu_n) \punt \beta
\punt \gamma, t]$ can be computed by evaluating equivalence
(\ref{(eq:14)}) after having replaced $\alpha$ with $(\mu_1 + \cdots +
\mu_n).$

In (\ref{(mff2)}), if we replace the umbra $\gamma$ with the $n$-tuple $\nubs=(\nu_1,
\dots, \nu_n),$ we obtain a characterization of
an auxiliary umbra whose generating function is the composition
of a multivariate formal power series  and  a different
multivariate formal power series. This characterization is given
in the following theorem, where the first multivariate formal
power series is umbrally represented by the $n$-tuple $\mubs$ and
the second multivariate formal power series is umbrally
represented by the $n$-tuple $\nubs.$

\begin{theorem}[Multivariate composite Multivariate] \label{mf3}  Let
$\mubs=(\mu_1, \ldots, \mu_n)$ be a $n$-tuple of umbral
monomials with generating function $f(\mubs,\tbs)$ and
$\nubs=(\nu_1, \ldots, \nu_n)$ be a $n$-tuple of umbral
monomials with generating function $f(\nubs,\tbs).$
The auxiliary umbra $(\mu_1 + \cdots + \mu_n) \punt \beta
\punt \nubs$ has generating function
$$f[(\mu_1 + \cdots + \mu_n) \punt \beta \punt \nubs, \tbs]=
f[\mubs, (f(\nubs,\tbs)-1, \ldots, f(\nubs,\tbs)-1)].$$
\end{theorem}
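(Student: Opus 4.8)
The plan is to follow exactly the template already used for Theorem \ref{mf2}, simply replacing the univariate umbra $\gamma$ by the $n$-tuple $\nubs$ at every step. First I would recall from the proof of Theorem \ref{mf2} that the key identity is the remark-based computation $f[(\mu_1 + \cdots + \mu_n) \punt \beta \punt \gamma, t] = E\{\exp[(\mu_1 + \cdots + \mu_n)(f(\gamma,t)-1)]\}$, which itself comes from Remark \ref{mf} applied to the composition umbra $(\mu_1 + \cdots + \mu_n) \punt \beta \punt \gamma$: setting $\alpha = \mu_1 + \cdots + \mu_n$, equation (\ref{faasing}) gives $f(\alpha \punt \beta \punt \gamma, t) = f[\alpha, f(\gamma,t)-1]$, and since $\alpha$ here is the sum $\mu_1 + \cdots + \mu_n$, expanding $f[\alpha, s]$ via $E[\exp(\alpha s)]$ yields the exponential form. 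The same reasoning applies verbatim when $\gamma$ is replaced by $\nubs$ and $t$ by $\tbs$, because Remark \ref{mf} and equation (\ref{faasing}) were stated for a general umbra $\gamma$, and the composition umbra construction $\alpha \punt \beta \punt \mubs$ together with its generating function (Theorem \ref{mf1}) is already available for an arbitrary $n$-tuple in the second slot.

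Concretely, the proof runs as follows. By Theorem \ref{mf1} applied with $\alpha$ replaced by $\mu_1 + \cdots + \mu_n$ and with the second $n$-tuple being $\nubs$, the auxiliary umbra $(\mu_1 + \cdots + \mu_n) \punt \beta \punt \nubs$ has generating function $f[(\mu_1 + \cdots + \mu_n), f(\nubs,\tbs)-1]$, where the outer generating function is that of the one-dimensional umbra $\mu_1 + \cdots + \mu_n$ evaluated at the scalar argument $f(\nubs,\tbs)-1$. Now I write this out using the evaluation $E$:
\begin{eqnarray*}
f[(\mu_1 + \cdots + \mu_n) \punt \beta \punt \nubs, \tbs]
& = & E\{\exp[(\mu_1 + \cdots + \mu_n)(f(\nubs,\tbs)-1)]\} \\
& = & E\{\exp(\mu_1 [f(\nubs,\tbs)-1] + \cdots + \mu_n [f(\nubs,\tbs)-1])\}.
\end{eqnarray*}
The last expression is precisely $E[e^{\mubs \, \sbs^T}]$ with $\sbs = (f(\nubs,\tbs)-1, \ldots, f(\nubs,\tbs)-1)$, so by (\ref{(gf)}) it equals $f[\mubs, (f(\nubs,\tbs)-1, \ldots, f(\nubs,\tbs)-1)]$, which is the claimed formula.

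The only point requiring a word of care — and the step I would flag as the main (very mild) obstacle — is justifying the passage from the scalar power series $f[\mu_1 + \cdots + \mu_n, s]$ to the exponential $E[\exp((\mu_1 + \cdots + \mu_n)s)]$ when $s = f(\nubs,\tbs)-1$ is itself a multivariate formal power series with zero constant term. This is the same manipulation already performed (without extra comment) in the proof of Theorem \ref{mf2}; it is legitimate because $f(\nubs,\tbs)-1$ has no constant term, so substituting it into the formal exponential produces a well-defined formal power series in $\tbs$, and the coefficientwise extension of $E$ commutes with this substitution. One could alternatively avoid any substitution subtlety by invoking Theorem \ref{mf1} directly, as I did above, since that theorem was proved at the level of formal identities of generating functions. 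Either way the argument is short: Theorem \ref{mf3} is essentially a corollary of Theorem \ref{mf1} (equivalently of Theorem \ref{mf2} with $\gamma \rightsquigarrow \nubs$), and no genuinely new computation is needed.
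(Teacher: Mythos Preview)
Your proposal is correct and matches the paper's own treatment: the paper does not give a separate proof of Theorem \ref{mf3} but simply remarks, just before stating it, that one obtains it from (\ref{(mff2)}) by replacing the umbra $\gamma$ with the $n$-tuple $\nubs$, which is exactly the substitution you carry out in detail via Theorem \ref{mf1} and the exponential expansion. Your added remark about the formal-power-series substitution being legitimate because $f(\nubs,\tbs)-1$ has zero constant term is a welcome clarification of a step the paper leaves implicit.
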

The $\ibs$-th coefficient of $f[(\mu_1 + \cdots + \mu_n) \punt \beta \punt \nubs, \tbs]$
can be computed by evaluating equivalence
(\ref{eq:17}) after having replaced $\mubs$ with $\nubs$ and
the umbra $\alpha$ with $(\mu_1 + \cdots + \mu_n).$

In Theorem \ref{mf3}, we can also choose a $m$-tuple
$\nubs=(\nu_1, \dots, \nu_m),$ with $m \ne n.$ To this aim, from
now on we denote by $\tbs_{(n)}$ the vector $(t_1, t_2, \ldots,
t_n)$ of length $n.$

\begin{theorem}[Multivariate composite Multivariate] \label{mf4}  Let
$\mubs=(\mu_1, \ldots, \mu_n)$ be a $n$-tuple of umbral
monomials with generating function $f(\mubs,\tbs_{(n)})$ and
$\nubs=(\nu_1, \ldots, \nu_m)$ be a $m$-tuple of umbral
monomials with generating function $f(\nubs,\tbs_{(m)}).$
The auxiliary umbra $(\mu_1 + \cdots + \mu_n) \punt \beta
\punt \nubs$ has generating function
\begin{equation}
f[(\mu_1 + \cdots + \mu_n) \punt \beta \punt \nubs, \tbs_{(m)}]=
f[\mubs, (f(\nubs,\tbs_{(m)})-1, \ldots, f(\nubs,\tbs_{(m)})-1)].
\label{(mff4)}
\end{equation}
\end{theorem}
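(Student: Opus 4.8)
The plan is to imitate the proof of Theorem \ref{mf3} and simply track the number of components carefully; the asymmetry $m\ne n$ changes nothing essential because the umbra $(\mu_1+\cdots+\mu_n)$ is a univariate object, obtained by adding up the $n$ components of $\mubs$, and the dot product $\cdot\,\beta\,\cdot\,\nubs$ only sees this univariate umbra on the left and the $m$-tuple $\nubs$ on the right. First I would recall from Remark \ref{mf} the defining property of the composition umbra $\alpha\punt\beta\punt\gamma$ in the form that applies when the inner umbra is replaced by a tuple: by (the multivariate analogue underlying) equivalence (\ref{eq:17}), for any umbra $\alpha$ and any $m$-tuple $\nubs$ one has $f(\alpha\punt\beta\punt\nubs,\tbs_{(m)})=f[\alpha,f(\nubs,\tbs_{(m)})-1]$, which is exactly the content of Theorem \ref{mf1} with $\mubs$ renamed $\nubs$ and the vector of indeterminates taken of length $m$.

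Next I would specialise $\alpha=\mu_1+\cdots+\mu_n$. Writing $s:=f(\nubs,\tbs_{(m)})-1$ for brevity (a single formal power series in $\tbs_{(m)}$ with no constant term), the previous step gives
\begin{equation}
f[(\mu_1+\cdots+\mu_n)\punt\beta\punt\nubs,\tbs_{(m)}]
= f[\mu_1+\cdots+\mu_n,\,s]
= E\bigl\{\exp[(\mu_1+\cdots+\mu_n)\,s]\bigr\},
\nonumber
\end{equation}
where the last equality is the definition of the univariate generating function of the umbra $\mu_1+\cdots+\mu_n$ evaluated at the formal series $s$ (the substitution is legitimate because $s$ has zero constant term, so only finitely many terms contribute to each coefficient). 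Now expand the exponential of the sum: $\exp[(\mu_1+\cdots+\mu_n)s]=\exp(\mu_1 s)\exp(\mu_2 s)\cdots\exp(\mu_n s)$, and apply $E$. The key observation is that the $\mu_i$ need not be uncorrelated, so $E$ does not factor over the product in general; instead one recognises the right-hand side directly as the generating function (\ref{(gf)}) of the $n$-tuple $\mubs$ evaluated at the point $(s,s,\ldots,s)\in$ (formal series)$^n$, i.e.
\begin{equation}
E\bigl\{\exp(\mu_1 s+\mu_2 s+\cdots+\mu_n s)\bigr\}
= E\bigl[e^{\mubs\,(s,\ldots,s)^T}\bigr]
= f[\mubs,(s,\ldots,s)]
= f[\mubs,(f(\nubs,\tbs_{(m)})-1,\ldots,f(\nubs,\tbs_{(m)})-1)],
\nonumber
\end{equation}
which is the claimed identity (\ref{(mff4)}).

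The only point requiring a little care — and the one I would flag as the main obstacle — is justifying the substitution of the formal power series $s=f(\nubs,\tbs_{(m)})-1$ into the univariate generating function $f(\mu_1+\cdots+\mu_n,\cdot)$ and, symmetrically, into each slot of the $n$-variate generating function $f(\mubs,\cdot)$: one must check that these compositions of formal power series are well defined, which holds precisely because $s$ has vanishing constant term, and that the coefficientwise extension of $E$ commutes with these substitutions, exactly as in the proofs of Proposition \ref{aa} and Theorem \ref{mf1}. Once this bookkeeping is in place the theorem is immediate, and in fact it contains Theorem \ref{mf3} as the special case $m=n$ and Theorem \ref{mf2} as the special case $m=1$ with $\nubs=\gamma$.
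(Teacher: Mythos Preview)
Your argument is correct and is exactly the route the paper intends: the paper does not give an explicit proof of Theorem~\ref{mf4} but presents it as the immediate extension of Theorem~\ref{mf3} (and hence of Theorem~\ref{mf2}) obtained by allowing $m\ne n$, and the proof of Theorem~\ref{mf2} is precisely your computation---apply Theorem~\ref{mf1} with $\alpha=\mu_1+\cdots+\mu_n$, write the result as $E\{\exp[(\mu_1+\cdots+\mu_n)s]\}$ with $s=f(\nubs,\tbs_{(m)})-1$, and recognise this via~(\ref{(gf)}) as $f[\mubs,(s,\ldots,s)]$. Your remarks on the well-posedness of the formal substitution and on the special cases $m=n$ and $m=1$ are accurate and add nothing beyond what the paper assumes implicitly.
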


The auxiliary umbra $(\mu_1 + \cdots + \mu_n) \punt \beta \punt
\nubs$ is a first umbral expression of the generalized Bell
polynomials, introduced in \cite{Const}. A more general expression
can be obtained from equation (\ref{(mff4)}), by replacing each
occurrence of the  umbral monomial $\nubs$ with a $m$-tuple of
umbral monomials. In other words, we deal with the composition of a
multivariate formal power series in $n$ variables and $n$
distinct multivariate formal power series in $m$ variables.

\begin{theorem}[Multivariate composite different multivariates] \label{mf5}   Let
$\mubs=(\mu_1, \ldots, \mu_n)$ be a $n$-tuple of umbral
monomials with generating function $f(\mubs,\tbs_{(n)})$ and
$\{\nubs_1, \nubs_2, \ldots, \nubs_n\}$ be a set of $m$-tuples of umbral
monomials with generating function $f(\nubs_i,\tbs_{(m)}),$ for
$i=1,2,\cdots,n.$ The auxiliary umbra $\mu_1 \punt \beta \punt \nubs_1 
+ \cdots + \mu_n \punt \beta \punt \nubs_n$ has generating function
$$f[\mu_1 \punt \beta \punt \nubs_1 + \cdots + \mu_n \punt \beta \punt
\nubs_n, \tbs_{(m)}] = f[\mubs, (f(\nubs_1,\tbs_{(m)})-1, \ldots, f(\nubs_n,\tbs_{(m)})-1)].$$
\end{theorem}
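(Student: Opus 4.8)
The plan is to follow the pattern of the proofs of Theorems \ref{mf2}, \ref{mf3} and \ref{mf4}: reduce the composition to a single evaluation of $E$ on one exponential series, the only new feature being that the inner $m$-tuples $\nubs_i$ change from summand to summand. First I would apply Theorem \ref{mf1} to each summand, with $\alpha$ replaced by the umbral monomial $\mu_i$ and $\mubs$ replaced by $\nubs_i$ (extending equivalence (\ref{eq:17}) to umbral-polynomial $\alpha$ exactly as is already done in Theorem \ref{mf4}): this gives that $\mu_i \punt \beta \punt \nubs_i$ has generating function
$$f(\mu_i \punt \beta \punt \nubs_i, \tbs_{(m)}) = f[\mu_i,\, f(\nubs_i,\tbs_{(m)})-1] = E\big[\exp\big(\mu_i\,(f(\nubs_i,\tbs_{(m)})-1)\big)\big],$$
the last equality being nothing but the meaning of $f[\mu_i,\cdot]$ evaluated at a power series with zero constant term.

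Next, writing $\omega = \mu_1 \punt \beta \punt \nubs_1 + \cdots + \mu_n \punt \beta \punt \nubs_n$, by (\ref{(gf)}) the generating function of the $m$-tuple $\omega$ is $f(\omega,\tbs_{(m)}) = E[e^{\omega\,\tbs_{(m)}^T}]$, and since the exponential of a sum factors,
$$f(\omega,\tbs_{(m)}) = E\Big[\prod_{i=1}^{n} \exp\big((\mu_i \punt \beta \punt \nubs_i)\,\tbs_{(m)}^T\big)\Big].$$
The hard part is to move the evaluation inside the product, i.e.\ to replace the $i$-th factor by $\exp(\mu_i\,(f(\nubs_i,\tbs_{(m)})-1))$. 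Here one must be careful, because the umbral monomials $\mu_1,\dots,\mu_n$ need not be uncorrelated, so the factors are \emph{not} independent and the product cannot simply be pulled apart; this is precisely why the right-hand side of the theorem is the full joint generating function $f(\mubs,\cdot)$ and not a product of univariate ones. The point is rather that, for distinct $i$, the auxiliary umbra $\mu_i \punt \beta \punt \nubs_i$ is constructed from its own family of uncorrelated $m$-tuples similar to $\nubs_i$; these fresh copies occur in no other factor and are uncorrelated with every $\mu_j$, while $\mu_i$ itself is left in place. Concretely, I would argue by induction on $n$, peeling off $\mu_n \punt \beta \punt \nubs_n$: by the uncorrelation property one may evaluate out just the fresh copies entering that last factor, which by the one-summand computation above turns it into $\exp(\mu_n\,(f(\nubs_n,\tbs_{(m)})-1))$ (a series in $\mu_n$ alone), and then recurse on the remaining $n-1$ factors, whose fresh copies are disjoint from it.

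Carrying this out gives
$$f(\omega,\tbs_{(m)}) = E\Big[\prod_{i=1}^{n} \exp\big(\mu_i\,(f(\nubs_i,\tbs_{(m)})-1)\big)\Big] = E\Big[\exp\Big(\sum_{i=1}^{n} \mu_i\,(f(\nubs_i,\tbs_{(m)})-1)\Big)\Big],$$
and recognising the right-hand side, again via (\ref{(gf)}), as $E[e^{\mubs\,\boldsymbol{s}^T}] = f(\mubs,\boldsymbol{s})$ with $\boldsymbol{s} = (f(\nubs_1,\tbs_{(m)})-1,\dots,f(\nubs_n,\tbs_{(m)})-1)$ yields the claimed identity. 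Theorems \ref{mf2}--\ref{mf4} are then recovered as the special cases $\nubs_i$ univariate, $\nubs_1=\cdots=\nubs_n$, and $\mu_i$ a single umbra, respectively. The only genuine obstacle is the inductive ``evaluate the fresh copies out of one factor at a time'' step; everything else is bookkeeping already carried out in Proposition \ref{aa} and Theorem \ref{mf1}.
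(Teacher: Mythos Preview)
Your argument is correct: the core observation---that the fresh copies of $\nubs_i$ entering $\mu_i \punt \beta \punt \nubs_i$ are uncorrelated with every $\mu_j$ and with the fresh copies used in the other summands, so that their contribution can be evaluated first---is exactly what is needed, and your induction is a legitimate way to organise it.

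The paper packages the same idea differently. Rather than peel off factors one at a time, it passes to the polynomial ring $R[x_1,\ldots,x_n][A]$ and invokes the known identity $(x_1+\cdots+x_n)\punt\beta\punt\alpha \equiv x_1\punt\beta\punt\alpha+\cdots+x_n\punt\beta\punt\alpha$ together with its generating-function form, obtaining
\[
f(x_1\punt\beta\punt\nubs_1+\cdots+x_n\punt\beta\punt\nubs_n,\tbs_{(m)})=\exp\Big\{\sum_{i=1}^n x_i[f(\nubs_i,\tbs_{(m)})-1]\Big\}
\]
with the $x_i$ genuine indeterminates. It then replaces $x_i$ by the symbol $\mu_i$, viewing $E$ first as a map $R[\mu_1,\ldots,\mu_n][A]\to R[\mu_1,\ldots,\mu_n]$ (this is your ``evaluate the fresh copies'' step done all at once), and finally applies the second evaluation $R[\mu_1,\ldots,\mu_n]\to R$. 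So the paper's device is a \emph{two-stage evaluation via an extended coefficient ring}; yours is a direct term-by-term appeal to uncorrelation. The ring-extension route avoids the inductive bookkeeping and makes the ``partial evaluation'' step formally transparent, while your approach stays inside the original setting and makes explicit which uncorrelation facts are actually being used. Both reach the same identity $E\big[\exp\big(\sum_i \mu_i[f(\nubs_i,\tbs_{(m)})-1]\big)\big]=f(\mubs,\boldsymbol{s})$ at the end.
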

\begin{proof}
If we replace the real or complex field $R$ with the polynomial
ring $R[x_1, \ldots, x_n],$ then the uncorrelation property
(\ref{(iii)}) has to be rewritten as \cite{Dinardo1}
$$E[x_i^k x_j^m \cdots \alpha^{s} \beta^{t} \cdots] = x_i^k x_j^m \cdots E[\alpha
^{s}]E[\beta^{t}] \cdots,
$$
for any set of distinct umbrae in $A,$ and for nonnegative
integers $k, m, s, t.$ In $R[x_1, \ldots , x_n][A],$ the following
property holds
$$(x_1 + \cdots + x_n) \punt \beta \punt \alpha \equiv
(x_1 \punt \beta \punt \alpha + \cdots + x_n \punt \beta \punt \alpha).$$
In particular we have
$$f(x_1 \punt \beta \punt \nu_1 + \cdots + x_n \punt \beta \punt \nu_n, t)
=\exp\{x_1 [f(\nu_1,t)-1] +  \cdots + x_n [f(\nu_n,t)-1]\},$$
where $\{\nu_1, \ldots, \nu_n\}$ are umbral monomials
with supports not necessarily disjoint. If we replace $\nu_i$ with the $m$-tuple $\nubs_i,$ then we still have
\begin{equation}
f(x_1 \punt \beta \punt \nubs_1 + \cdots + x_n \punt \beta \punt \nubs_n, \tbs_{(m)})
=\exp\left\{ \sum_{i=1}^n x_i [f(\nubs_i,\tbs_{(m)}) - 1] \right\}.
\label{(belgen)}
\end{equation}
Equation (\ref{(belgen)}) still holds if we replace the variables
$\{x_1, \ldots, x_n\}$  with the symbols $\{\mu_1, \ldots, \mu_n\}$
and define the evaluation $E$ on the polynomial ring $R[\mu_1, \ldots , \mu_n][A]$
taking values in $R[\mu_1, \ldots , \mu_n],$ that is
\begin{equation}
f(\mu_1 \punt \beta \punt \nubs_1 + \cdots + \mu_n \punt \beta \punt \nubs_n, \tbs_{(m)})
=\exp\left\{ \sum_{i=1}^n \mu_i [f(\nubs_i,\tbs_{(m)}) - 1] \right\}.
\label{(belgen1)}
\end{equation}
Now, if we apply the linear functional $E,$
defined on the polynomial ring $R[\mu_1, \ldots , \mu_n]$
and taking values in $R,$ to equation (\ref{(belgen1)}) we have
$$E[f(\mu_1 \punt \beta \punt \nubs_1 + \cdots + \mu_n \punt \beta \punt \nubs_n, \tbs_{(m)})]
=f[\mubs, (f(\nubs_1,\tbs_{(m)})-1, \ldots, f(\nubs_n,\tbs_{(m)})-1)],$$ by which the result follows.
\end{proof}
In \cite{Const}, the $\ibs$-th coefficient $B_{\ibs}(x_1, \ldots, x_n)$ of $\exp\left\{
\sum_{i=1}^n x_i [f(\nubs_i,\tbs_{(m)}) - 1] \right\}$ is called
{\it generalized Bell polynomial}. From equation (\ref{(belgen)}),
the following corollary is immediately stated.
\begin{corollary}[Umbral representation of generalized Bell polynomials]  If
$B_{\ibs}(x_1, \ldots, x_n)$ denotes the generalized Bell
polynomial, then we have
$$B_{\ibs}(x_1, \ldots, x_n) \simeq (x_1 \punt \beta \punt \nubs_1 + \cdots +
x_n \punt \beta \punt \nubs_n)^{\ibs}.$$
\end{corollary}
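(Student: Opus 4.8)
The plan is to derive the umbral representation directly from equation~(\ref{(belgen)}), which has already been established in the proof of Theorem~\ref{mf5}. The point of equation~(\ref{(belgen)}) is that the auxiliary umbra $x_1 \punt \beta \punt \nubs_1 + \cdots + x_n \punt \beta \punt \nubs_n$, living in the polynomial ring $R[x_1, \ldots, x_n][A]$, has generating function exactly $\exp\left\{ \sum_{i=1}^n x_i [f(\nubs_i,\tbs_{(m)}) - 1] \right\}.$ By definition, the $\ibs$-th multivariate moment of an auxiliary umbra is the coefficient of $\tbs_{(m)}^{\ibs}/\ibs!$ in its generating function. So the first step is simply to expand the right-hand side of~(\ref{(belgen)}) as a multivariate formal power series in $\tbs_{(m)}$ and read off the coefficient of $\tbs_{(m)}^{\ibs}/\ibs!$; by the definition of $B_{\ibs}(x_1,\ldots,x_n)$ recalled just before the statement, this coefficient is $B_{\ibs}(x_1,\ldots,x_n)$.

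The second step is to match this with the umbral side. Since $x_1 \punt \beta \punt \nubs_1 + \cdots + x_n \punt \beta \punt \nubs_n$ is the auxiliary umbra whose generating function is $f(x_1 \punt \beta \punt \nubs_1 + \cdots + x_n \punt \beta \punt \nubs_n, \tbs_{(m)})$, extending the evaluation $E$ coefficientwise to generating functions (as done throughout Section~2) gives
$$
E\left[(x_1 \punt \beta \punt \nubs_1 + \cdots + x_n \punt \beta \punt \nubs_n)^{\ibs}\right]
= \ibs! \cdot [\tbs_{(m)}^{\ibs}]\, f(x_1 \punt \beta \punt \nubs_1 + \cdots + x_n \punt \beta \punt \nubs_n, \tbs_{(m)}).
$$
Combining this with~(\ref{(belgen)}) and the computation of the previous step yields $E\left[(x_1 \punt \beta \punt \nubs_1 + \cdots + x_n \punt \beta \punt \nubs_n)^{\ibs}\right] = B_{\ibs}(x_1, \ldots, x_n)$, which is precisely the umbral equivalence $B_{\ibs}(x_1, \ldots, x_n) \simeq (x_1 \punt \beta \punt \nubs_1 + \cdots + x_n \punt \beta \punt \nubs_n)^{\ibs}$ asserted in the statement.

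There is essentially no obstacle here: once equation~(\ref{(belgen)}) is in hand, the corollary is a matter of unwinding definitions — "the $\ibs$-th moment is the $\ibs$-th Taylor coefficient of the generating function" — together with the definition of $B_{\ibs}$ quoted from~\cite{Const}. The one point deserving a word of care is that the evaluation $E$ here is the functional on $R[x_1,\ldots,x_n][A]$ taking values in $R[x_1,\ldots,x_n]$ (as set up at the end of the proof of Theorem~\ref{mf5}), so the equivalence $\simeq$ is an identity of polynomials in $x_1, \ldots, x_n$, not of scalars; but this is exactly the regime in which~(\ref{(belgen)}) was proved, so no additional argument is required. Hence the corollary follows immediately.
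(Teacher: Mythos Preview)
Your proposal is correct and matches the paper's approach exactly: the paper does not give a separate proof but simply says the corollary ``is immediately stated'' from equation~(\ref{(belgen)}), and your argument is precisely the definitional unwinding (coefficient extraction from the generating function, plus the definition of $B_{\ibs}$) that justifies this immediacy. Your remark about the evaluation $E$ taking values in $R[x_1,\ldots,x_n]$ is apt and consistent with the setup in the proof of Theorem~\ref{mf5}.
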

From Theorem \ref{mf5}, the general expression of the multivariate
Fa\`a di Bruno's formula can be computed by evaluating the
generalized Bell polynomial $B_{\ibs}(\mu_1, \ldots, \mu_n),$
as the following corollary states.
\begin{corollary}[Multivariate Fa\`a di Bruno's formula] \label{MFB} If
$B_{\ibs}(x_1, \ldots, x_n)$ denotes the generalized Bell
polynomial, then we have
\begin{equation}
B_{\ibs}(\mu_1, \ldots, \mu_n) \simeq (\mu_1 \punt \beta \punt
\nubs_1 + \cdots + \mu_n \punt \beta \punt \nubs_n)^{\ibs}.
\label{(belgen2)}
\end{equation}
\end{corollary}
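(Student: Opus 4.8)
The plan is to read the statement off directly from the generating-function identity established in the proof of Theorem~\ref{mf5}, combined with the defining property of the generalized Bell polynomials; no new computation should be needed.

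First I would recall that, by definition (see \cite{Const} and equation~(\ref{(belgen)})), the polynomials $B_{\ibs}$ are the Taylor coefficients
$$\exp\left\{\sum_{i=1}^n x_i\,[f(\nubs_i,\tbs_{(m)})-1]\right\} = 1 + \sum_{\ibs>0} B_{\ibs}(x_1,\ldots,x_n)\,\frac{\tbs_{(m)}^{\ibs}}{\ibs!}.$$
Since extracting the coefficient of $\tbs_{(m)}^{\ibs}$ is a linear operation that commutes with any substitution made in the coefficient ring, performing the replacement $x_i\mapsto\mu_i$ --- the same one carried out in passing from (\ref{(belgen)}) to (\ref{(belgen1)}) in the proof of Theorem~\ref{mf5} --- gives
$$\exp\left\{\sum_{i=1}^n \mu_i\,[f(\nubs_i,\tbs_{(m)})-1]\right\} = 1 + \sum_{\ibs>0} B_{\ibs}(\mu_1,\ldots,\mu_n)\,\frac{\tbs_{(m)}^{\ibs}}{\ibs!}.$$

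Next I would invoke equation~(\ref{(belgen1)}), which says that the left-hand side here is exactly the generating function $f(\mu_1\punt\beta\punt\nubs_1+\cdots+\mu_n\punt\beta\punt\nubs_n,\tbs_{(m)})$ of the auxiliary umbra $\mu_1\punt\beta\punt\nubs_1+\cdots+\mu_n\punt\beta\punt\nubs_n$. By the definition of the generating function of an $n$-tuple (cf. (\ref{(gf)}) and the relation $E[e^{\mubs \, \tbs^T}]=f(\mubs, \tbs)$), the coefficient of $\tbs_{(m)}^{\ibs}/\ibs!$ in that generating function is $E[(\mu_1\punt\beta\punt\nubs_1+\cdots+\mu_n\punt\beta\punt\nubs_n)^{\ibs}]$. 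Matching this with the coefficient extracted from the right-hand side above yields $B_{\ibs}(\mu_1,\ldots,\mu_n) = E[(\mu_1\punt\beta\punt\nubs_1+\cdots+\mu_n\punt\beta\punt\nubs_n)^{\ibs}]$, which is precisely the claimed umbral equivalence~(\ref{(belgen2)}).

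The one point I would be careful about --- and the only possible obstacle --- is the bookkeeping of the two evaluation functionals used in the proof of Theorem~\ref{mf5}: identity (\ref{(belgen1)}) is stated in $R[\mu_1,\ldots,\mu_n][A]$, with the $\mu_i$ treated as indeterminates, whereas the equivalence~(\ref{(belgen2)}) is meant after the second evaluation $E\colon R[\mu_1,\ldots,\mu_n]\to R$ is applied. So I would note that coefficient extraction in $\tbs_{(m)}$ commutes with this second (linear) evaluation, hence applying it to the displayed equality of coefficients preserves it. Equivalently, one may argue by relabeling: in the preceding corollary (umbral representation of generalized Bell polynomials) the substitution $x_i\mapsto\mu_i$ is a ring homomorphism that intertwines the evaluation functionals over $R[x_1,\ldots,x_n]$ and over $R$, and therefore transports the umbral equivalence $B_{\ibs}(x_1,\ldots,x_n)\simeq(x_1\punt\beta\punt\nubs_1+\cdots+x_n\punt\beta\punt\nubs_n)^{\ibs}$ to the desired one. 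Either way the corollary is immediate.
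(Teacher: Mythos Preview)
Your argument is correct and is precisely the intended one: the paper states this as an immediate corollary of Theorem~\ref{mf5} (with no separate proof), and your elaboration---extracting the $\ibs$-th coefficient from (\ref{(belgen1)}) and identifying it on one side with $B_{\ibs}(\mu_1,\ldots,\mu_n)$ via (\ref{(belgen)}) and on the other with the $\ibs$-th moment of the auxiliary umbra---is exactly how the corollary is meant to be read off. Your care about the two-step evaluation is appropriate and matches the paper's remark following the corollary that $B_{\ibs}(\mu_1,\ldots,\mu_n)$ is umbrally equivalent to the $\ibs$-th coefficient of (\ref{(belgen1)}).
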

Note that the umbral polynomial $B_{\ibs}(\mu_1, \ldots, \mu_n)$ is umbrally equivalent to the $\ibs$-th
coefficient of the formal power series $\exp\left\{ \sum_{i=1}^n
\mu_i [f(\nubs_i,\tbs_{(m)}) - 1] \right\}$ in (\ref{(belgen1)}).
How to compute by means of a symbolic software the \lq\lq compressed\rq\rq multivariate Fa\`a di Bruno's formula
(\ref{(belgen2)}) is the object of the last section.
\section{Examples and applications.}
%
\subparagraph{Randomized compound Poisson random variable.}
As observed in \cite{Dinardo1}, the moments of a randomized Poisson random variable
are umbrally represented by the composition umbra $\alpha \punt \beta \punt \gamma.$
A randomized Poisson random variable is a random sum $S_N = X_1 +
\cdots + X_N$ of independent and identical distributed random
variables $\{X_i\},$ where $N$ is a Poisson random variable with
random parameter $Y.$ In $\alpha \punt \beta \punt \gamma$
the moments of $Y,$ if they exist, are umbrally represented by the umbra $\alpha$
 as well as the moments of $X_i,$ if they exist, are umbrally
represented by the umbra $\gamma.$
Similarly, the umbra $\alpha \punt \beta \punt \mubs$ is the
umbral counterpart of a multivariate randomized compound Poisson
random variable, that is a random sum $S_N = {\boldsymbol X}_1 + \cdots + {\boldsymbol X}_N$
of independent and identical distributed random vectors $\{{\boldsymbol X}_i\}.$
The evaluation $E,$ applied to both sides of equivalence (\ref{eq:17}),
gives the moments of a randomized compound Poisson random variable
in terms of moments of $\{{\boldsymbol X}_i\}$ and $N.$ This result is the same
stated in Theorem 4.1 of \cite{Const}, by using a different proof and different methods. Let us
underline that all auxiliary umbrae, constructed in the previous section,
admit probabilistic counterpart.
\subparagraph{Multivariate Laplace transform.} In multivariate probability theory,
the verification of sign alternation in derivatives of Laplace transform of densities
is one of the applications for which multivariate Fa\`a di Bruno's formula of all
orders is desirable. As it is well-known, the Laplace transform $L_{\boldsymbol X}(\tbs)$ 
of multivariate densities  is given by its moment generating function, with $\tbs$ replaced
by $-\tbs,$ that is $L_{\boldsymbol X}(\tbs)=f(\mubs, -\tbs),$ if the moments of the random vector
${\boldsymbol X}$ are umbrally represented by the $n$-tuple $\mubs.$ Since $f(\mubs, -\tbs) = f(-1 \punt \mubs, \tbs)
= f(-1 \punt \chi \punt \beta \punt \mubs, \tbs)$, equivalence (\ref{eq:17}) can be again
used in order to find the derivatives of Laplace transform, recalling that $E[(-1\punt \chi)^n] =
(-1)^n n!.$

\subparagraph{Multivariate cumulants.} If $\chi$ denotes the singleton umbra,
multivariate moments of $\chi \punt \mubs$ are the
so-called {\it multivariate cumulants} of $\mubs.$ Indeed from
Proposition \ref{mf1} we have
$f(\chi \punt \mubs, \tbs) = 1 + \log [f(\mubs,\tbs)],$ by
recalling that $\chi \punt \mubs \equiv \chi \punt \chi \punt \beta \punt \mubs$
and $f(\chi \punt \chi,t)=1 + \log(t+1).$
The equations giving multivariate cumulants in terms
of multivariate moments and vice-versa are considered in
\cite{McCullagh}. See \cite{Bernoulli} for a recent symbolic
treatment of this topic. Here, we remark that also equivalence
(\ref{eq:17}) allows us to express multivariate
cumulants in terms of multivariate moments, and vice-versa.
Indeed, since $\chi \punt \chi \punt \beta \punt \mubs \equiv
\chi \punt \mubs,$ multivariate cumulants in terms of multivariate 
moments are obtained by applying the
evaluation $E$ to both sides of equivalence (\ref{eq:17}), where
the umbra $\alpha$ has to be replaced by the umbra $\chi \punt \chi.$
Analogously, multivariate
moments in terms of multivariate cumulants are obtained by applying the evaluation
$E$ to both sides of equivalence (\ref{eq:17}), where the umbra
$\alpha$ has to be replaced by the umbra $u$ and the $n$-tuple $\mubs$
has to be replaced by $\chi \punt \mubs,$ taking into account that $u \punt
\beta \punt (\chi \punt \mubs) \equiv \mubs.$

The multivariate analogous of the well-known semi-invariance property of cumulants
is stated in Proposition \ref{corc}. First we have to define the \textit{disjoint sum}
$\mubs \dot{+} \nubs$ of two $n$-tuples $\mubs$ and $\nubs.$ As in the
univariate case, the  auxiliary umbra $\mubs \dot{+} \nubs$ is such that
$(\mubs \dot{+} \nubs)^{\ibs} \simeq \mubs^{\ibs} + \nubs^{\ibs}.$ Note that the
contribution of mixed products in the disjoint sum is zero.
\begin{proposition} \label{corc}
If $\mubs$ and $\nubs$ are uncorrelated $n$-tuples of umbral monomials,
then $\chi \punt (\mubs + \nubs) \equiv \chi \punt \mubs \dot{+} \chi \punt \nubs.$
\end{proposition}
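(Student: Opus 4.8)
The plan is to translate the claimed umbral similarity $\chi \punt (\mubs + \nubs) \equiv \chi \punt \mubs \dot{+} \chi \punt \nubs$ into an identity of generating functions and then check that identity directly. Since two $n$-tuples of umbral monomials are similar precisely when they have the same generating function, it suffices to compute $f(\chi \punt (\mubs + \nubs), \tbs)$ and $f(\chi \punt \mubs \dot{+} \chi \punt \nubs, \tbs)$ and show they agree.

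First I would handle the left-hand side. Using the multivariate composition relation $\chi \punt \mubs \equiv \chi \punt \chi \punt \beta \punt \mubs$ together with $f(\chi\punt\chi,t)=1+\log(t+1)$ (both recalled in the \lq\lq Multivariate cumulants\rq\rq\ paragraph), Theorem \ref{mf1} gives $f(\chi \punt \mubs, \tbs) = 1 + \log[f(\mubs,\tbs)]$, and likewise $f(\chi \punt (\mubs+\nubs), \tbs) = 1 + \log[f(\mubs+\nubs,\tbs)]$. Now $\mubs+\nubs = m\punt\text{(something)}$ is not quite what we need; instead, since the supports are uncorrelated, the generating function of the sum $\mubs+\nubs$ factors as $f(\mubs+\nubs,\tbs) = f(\mubs,\tbs)\, f(\nubs,\tbs)$ — this is the same factorization argument used in the proof of Proposition \ref{uncorrprop}, applied to the $2n$ uncorrelated components. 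Hence the left-hand side has generating function $1 + \log[f(\mubs,\tbs)] + \log[f(\nubs,\tbs)]$.

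Next I would handle the right-hand side. By the defining property of the disjoint sum, $(\mubs \dot{+} \nubs)^{\ibs} \simeq \mubs^{\ibs} + \nubs^{\ibs}$, so its multivariate moments are $g_{\ibs}^{(\mubs)} + g_{\ibs}^{(\nubs)}$ for $\ibs>0$ and $1$ for $\ibs=0$; summing against $\tbs^{\ibs}/\ibs!$ gives $f(\mubs\dot{+}\nubs,\tbs) = f(\mubs,\tbs) + f(\nubs,\tbs) - 1$. Applying this to the two $n$-tuples $\chi\punt\mubs$ and $\chi\punt\nubs$ (which are uncorrelated since $\mubs,\nubs$ are, and dot-multiplication by the fixed umbra $\chi$ preserves uncorrelation), and using $f(\chi\punt\mubs,\tbs) = 1+\log[f(\mubs,\tbs)]$ and $f(\chi\punt\nubs,\tbs)=1+\log[f(\nubs,\tbs)]$, I get $f(\chi \punt \mubs \dot{+} \chi \punt \nubs, \tbs) = 1 + \log[f(\mubs,\tbs)] + \log[f(\nubs,\tbs)]$, which matches the left-hand side. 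Therefore the two auxiliary umbrae are similar.

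The main obstacle is bookkeeping rather than anything deep: one must be careful that the disjoint-sum operation on $n$-tuples behaves as expected (in particular that $f(\mubs\dot{+}\nubs,\tbs)=f(\mubs,\tbs)+f(\nubs,\tbs)-1$ and that mixed products genuinely drop out, as noted just before the statement), and that the uncorrelation needed to apply the product formula for $f(\mubs+\nubs,\tbs)$ is inherited after composing with $\chi$. Both points are routine consequences of the uncorrelation property (\ref{(iii)}) and the definitions, but they are the places where a sloppy argument could go wrong. An alternative, purely combinatorial route would be to expand $(\chi\punt(\mubs+\nubs))^{\ibs}$ via equivalence (\ref{eq:17}) with $\alpha$ replaced by $\chi\punt\chi$ and observe that the singleton $\chi$ forces $l(\lambdabs)=1$, collapsing the sum to single-column partitions, and then match terms; the generating-function proof is cleaner, so that is the one I would write.
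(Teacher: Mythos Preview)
Your proposal is correct and follows essentially the same route as the paper: the paper's entire proof is the single observation $f(\mubs\dot{+}\nubs,\tbs)=f(\mubs,\tbs)+f(\nubs,\tbs)-1$, leaving implicit exactly the two generating-function computations you spell out (factoring $f(\mubs+\nubs,\tbs)$ by uncorrelation and taking logarithms on both sides). Your write-up simply makes explicit what the paper compresses into one line; the only superfluous remark is the parenthetical about $\chi\punt\mubs$ and $\chi\punt\nubs$ being uncorrelated, since the disjoint-sum generating-function identity follows straight from the moment definition $(\mubs\dot{+}\nubs)^{\ibs}\simeq\mubs^{\ibs}+\nubs^{\ibs}$ and needs no uncorrelation hypothesis.
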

\begin{proof}
The result follows by observing that
$f(\mubs \dot{+} \nubs, \tbs) = f(\mubs, \tbs) + f(\nubs, \tbs) - 1.$
\end{proof}
Thus multivariate cumulants linearize convolutions of uncorrelated $n$-tuples
of umbral monomials. By using Propositions \ref{uncorrprop} and \ref{corc}, we are able
to state that mixed cumulants are zero in the case of uncorrelated components.
\begin{corollary}
If $\{\mu_i\}_{i=1}^n$ are uncorrelated umbral monomials, then
$\chi \punt \mubs \equiv \chi \punt \tilde{\mubs}_1 \dot{+} \cdots
\dot{+} \chi \punt \tilde{\mubs}_n,$
where $\mubs = (\mu_1, \ldots, \mu_n)$ and the uncorrelated vectors $\{\tilde{\mubs}_i\}_{i=1}^n$ are
obtained from $\mubs$ by replacing each component with $\varepsilon,$ except the $i$-th one,
that is $\tilde{\mubs}_i=(\varepsilon, \ldots, \mu_i, \ldots, \varepsilon).$
\end{corollary}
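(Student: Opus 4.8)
The plan is to combine Proposition \ref{uncorrprop}, which replaces $\mubs$ by an ordinary sum of uncorrelated $n$-tuples, with Proposition \ref{corc}, which turns $\chi \punt$ of a sum of two uncorrelated $n$-tuples into a disjoint sum. First I would invoke Proposition \ref{uncorrprop} to write $\mubs \equiv \tilde{\mubs}_1 + \cdots + \tilde{\mubs}_n$, where $\tilde{\mubs}_i = (\varepsilon, \ldots, \mu_i, \ldots, \varepsilon)$ and the $\{\tilde{\mubs}_i\}$ are mutually uncorrelated. Since $f(\chi \punt \mubs, \tbs) = 1 + \log f(\mubs,\tbs)$ depends only on the generating function of its argument, similar $n$-tuples yield similar composition umbrae; hence $\chi \punt \mubs \equiv \chi \punt (\tilde{\mubs}_1 + \cdots + \tilde{\mubs}_n)$.

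Next I would promote Proposition \ref{corc} to an $n$-term statement: if $\{\taubs_1, \ldots, \taubs_n\}$ are mutually uncorrelated $n$-tuples, then $\chi \punt (\taubs_1 + \cdots + \taubs_n) \equiv \chi \punt \taubs_1 \dot{+} \cdots \dot{+} \chi \punt \taubs_n$. This goes by induction on $n$, the base case $n=2$ being Proposition \ref{corc}. For the step I would write $\taubs_1 + \cdots + \taubs_n = (\taubs_1 + \cdots + \taubs_{n-1}) + \taubs_n$, observe that the partial sum $\taubs_1 + \cdots + \taubs_{n-1}$ is uncorrelated from $\taubs_n$ (this is routine: $E$ factors over products of umbral monomials in disjoint supports, so mutual uncorrelation of the summands passes to their sum), apply Proposition \ref{corc} to these two tuples, and then apply the inductive hypothesis to $\chi \punt (\taubs_1 + \cdots + \taubs_{n-1})$. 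One also needs associativity of $\dot{+}$, which is immediate from the identity $f(\mubs \dot{+} \nubs, \tbs) = f(\mubs,\tbs) + f(\nubs,\tbs) - 1$ recorded in the proof of Proposition \ref{corc}, so that $\chi \punt \taubs_1 \dot{+} \cdots \dot{+} \chi \punt \taubs_n$ is unambiguous. Applying this with $\taubs_i = \tilde{\mubs}_i$ gives $\chi \punt \mubs \equiv \chi \punt \tilde{\mubs}_1 \dot{+} \cdots \dot{+} \chi \punt \tilde{\mubs}_n$, which is the claim.

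A quicker route that bypasses the induction is purely via generating functions: the proof of Proposition \ref{uncorrprop} gives $f(\mubs,\tbs) = \prod_{i=1}^n f(\tilde{\mubs}_i,\tbs)$, whence $f(\chi \punt \mubs, \tbs) = 1 + \log f(\mubs,\tbs) = 1 + \sum_{i=1}^n \log f(\tilde{\mubs}_i,\tbs) = 1 + \sum_{i=1}^n \bigl[f(\chi \punt \tilde{\mubs}_i,\tbs) - 1\bigr]$, and by the (iterated) disjoint-sum identity the right-hand side is exactly $f(\chi \punt \tilde{\mubs}_1 \dot{+} \cdots \dot{+} \chi \punt \tilde{\mubs}_n, \tbs)$; equality of generating functions is similarity. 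Either way the only real obstacle is the bookkeeping surrounding the disjoint sum — its associativity and the preservation of uncorrelation under forming partial sums — since once that is in place Propositions \ref{uncorrprop} and \ref{corc} assemble the result directly.
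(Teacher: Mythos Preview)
Your proposal is correct and matches the paper's intended approach: the paper does not give an explicit proof of this corollary but simply records that it follows by combining Propositions \ref{uncorrprop} and \ref{corc}, exactly as you do. Your added details (the induction extending Proposition \ref{corc} to $n$ summands, associativity of $\dot{+}$, and the alternative generating-function computation) are all fine elaborations of what the paper leaves implicit.
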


\subparagraph{Multivariate Hermite polynomials.}
The $\ibs$-th Hermite polynomial $H_{\ibs}(\boldsymbol{x}, \Sigma)$ is defined as
$H_{\ibs}(\boldsymbol{x}, \Sigma) = (-1)^{|\ibs|}
D_{\boldsymbol{x}}^{(\ibs)} \phi(\boldsymbol{x}; \boldsymbol{0}, \Sigma)/{\phi(\boldsymbol{x}; \boldsymbol{0}, \Sigma)},$
where $\phi(\boldsymbol{x}; \boldsymbol{0}, \Sigma)$ denotes the multivariate gaussian
density with $\boldsymbol{0}$ mean and covariance matrix $\Sigma$ of full rank $n.$
These polynomials are orthogonal with respect to $\phi(\boldsymbol{x}; \boldsymbol{0}, \Sigma)$ if we
consider the polynomials $\tilde{H}_{\ibs}(\boldsymbol{x}, \Sigma) = H_{\ibs}(\boldsymbol{x} \Sigma^{-1}, \Sigma^{-1}),$
where $\Sigma^{-1}$ denotes the inverse matrix of $\Sigma.$ The following result is proved in \cite{Whiters}
\begin{equation}
H_{\ibs}(\boldsymbol{x}, \Sigma) = E[( \boldsymbol{x} \Sigma^{-1} + i \boldsymbol{Y})^{\ibs}] \qquad
\tilde{H}_{\ibs}(\boldsymbol{x}, \Sigma) = E[(\boldsymbol{x} + i \boldsymbol{Z})^{\ibs}]
\label{(herfin)}
\end{equation}
where $i$ is the imaginary unit and $\boldsymbol{Z} \sim N(\boldsymbol{0}, \Sigma)$ \footnote{ Here
$\boldsymbol{Z} \sim N(\boldsymbol{0}, \Sigma)$ denotes  a multivariate gaussian vector $\boldsymbol{Z}$
with $\boldsymbol{0}$ mean and covariance matrix $\Sigma.$} and  $\boldsymbol{Y} \sim N(\boldsymbol{0}, \Sigma^{-1}).$
By keeping the length of the present paper within bounds, we use (\ref{(herfin)}) in order to get the umbral
expression of multivariate Hermite polynomials.
\begin{proposition}
If $\nubs$ is a $n$-tuple of umbral monomials such that $f(\nubs, \tbs)= 1 + \frac{1}{2} \tbs \Sigma^{-1} \tbs^T$
and $\mubs$ is a $n$-tuple of umbral monomials such that $f(\mubs, \tbs)= 1 + \frac{1}{2} \tbs \Sigma \tbs^T,$
then we have
\begin{equation}
H_{\ibs}(\boldsymbol{x}, \Sigma) \simeq (- 1 \punt \beta \punt \nubs + \boldsymbol{x} \Sigma^{-1})^{\ibs} \qquad
\tilde{H}_{\ibs}(\boldsymbol{x}, \Sigma) \simeq (-1 \punt \beta \punt \mubs + \boldsymbol{x})^{\ibs}.
\label{(herfinumb)}
\end{equation}
\end{proposition}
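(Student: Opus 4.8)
The plan is to recognize that the auxiliary umbra $-1\punt\beta\punt\nubs$ umbrally represents the (complex) moments of $i\boldsymbol{Y}$ with $\boldsymbol{Y}\sim N(\boldsymbol{0},\Sigma^{-1})$ and that $-1\punt\beta\punt\mubs$ umbrally represents those of $i\boldsymbol{Z}$ with $\boldsymbol{Z}\sim N(\boldsymbol{0},\Sigma)$; once this is established, a translation by the constant $n$-tuple $\boldsymbol{x}\Sigma^{-1}$ (resp. $\boldsymbol{x}$) reduces each claim to one of the two identities in (\ref{(herfin)}). Throughout, $-1$ denotes the scalar umbra, whose generating function is $f(-1,t)=\sum_{n\ge0}(-1)^{n}t^{n}/n!=e^{-t}$.

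First I would compute, via Theorem \ref{mf1} applied with $\alpha=-1$,
$$f(-1\punt\beta\punt\nubs,\tbs)=f[-1,\,f(\nubs,\tbs)-1]=\exp\{-[f(\nubs,\tbs)-1]\}=\exp\!\left(-\tfrac12\,\tbs\,\Sigma^{-1}\tbs^{T}\right).$$
On the probabilistic side, the moment generating function of $i\boldsymbol{Y}$ is $E[e^{i\boldsymbol{Y}\tbs^{T}}]=\exp\{\tfrac12(i\tbs)\Sigma^{-1}(i\tbs)^{T}\}=\exp(-\tfrac12\tbs\Sigma^{-1}\tbs^{T})$, which coincides with the expression just obtained; hence $-1\punt\beta\punt\nubs$ and $i\boldsymbol{Y}$ share the same sequence of multivariate moments. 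Next, since the entries of the $n$-tuple $\boldsymbol{x}\Sigma^{-1}$ are constants, applying $E$ to the generating function (\ref{(gf)}) of $-1\punt\beta\punt\nubs+\boldsymbol{x}\Sigma^{-1}$ factors out $e^{\boldsymbol{x}\Sigma^{-1}\tbs^{T}}$, so that $f(-1\punt\beta\punt\nubs+\boldsymbol{x}\Sigma^{-1},\tbs)=e^{\boldsymbol{x}\Sigma^{-1}\tbs^{T}}\exp(-\tfrac12\tbs\Sigma^{-1}\tbs^{T})$, which is exactly the moment generating function of $\boldsymbol{x}\Sigma^{-1}+i\boldsymbol{Y}$. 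Equating $\ibs$-th coefficients yields $(-1\punt\beta\punt\nubs+\boldsymbol{x}\Sigma^{-1})^{\ibs}\simeq(\boldsymbol{x}\Sigma^{-1}+i\boldsymbol{Y})^{\ibs}$, and the first identity in (\ref{(herfin)}) then gives $H_{\ibs}(\boldsymbol{x},\Sigma)\simeq(-1\punt\beta\punt\nubs+\boldsymbol{x}\Sigma^{-1})^{\ibs}$.

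The second equivalence follows along the same lines with $\nubs$ replaced by $\mubs$ and $\Sigma^{-1}$ by $\Sigma$: Theorem \ref{mf1} gives $f(-1\punt\beta\punt\mubs,\tbs)=\exp(-\tfrac12\tbs\Sigma\tbs^{T})$, which matches the moment generating function of $i\boldsymbol{Z}$ with $\boldsymbol{Z}\sim N(\boldsymbol{0},\Sigma)$; the constant shift by $\boldsymbol{x}$ turns this into the generating function of $\boldsymbol{x}+i\boldsymbol{Z}$, and the second identity in (\ref{(herfin)}) delivers $\tilde{H}_{\ibs}(\boldsymbol{x},\Sigma)\simeq(-1\punt\beta\punt\mubs+\boldsymbol{x})^{\ibs}$. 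The only point requiring care is the bookkeeping of the imaginary unit: one must track that evaluating the Gaussian moment generating function along $i\tbs$ reverses the sign of the quadratic form, so that the $-1$ built into $-1\punt\beta\punt\nubs$ and $-1\punt\beta\punt\mubs$ is precisely what compensates it; everything else is a direct application of Theorem \ref{mf1} and the uncorrelation property, and the existence of $n$-tuples $\mubs,\nubs$ with the prescribed purely quadratic generating functions is granted by hypothesis.
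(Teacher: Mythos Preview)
Your proof is correct and follows essentially the same route as the paper's: both arguments identify the generating function of the umbral expression $-1\punt\beta\punt\nubs+\boldsymbol{x}\Sigma^{-1}$ (resp.\ $-1\punt\beta\punt\mubs+\boldsymbol{x}$) with $\exp(\boldsymbol{x}\Sigma^{-1}\tbs^{T}-\tfrac12\tbs\Sigma^{-1}\tbs^{T})$ (resp.\ $\exp(\boldsymbol{x}\tbs^{T}-\tfrac12\tbs\Sigma\tbs^{T})$) and then appeal to~(\ref{(herfin)}). The paper is terser---it simply records the Hermite generating functions derived from~(\ref{(herfin)}) and leaves the umbral-side computation implicit---whereas you spell out the use of Theorem~\ref{mf1} and the constant-shift factorization; but the underlying argument is the same.
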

\begin{proof}
Equivalences (\ref{(herfinumb)}) follows by observing that from (\ref{(herfin)}) we have
\begin{eqnarray}
1 + \sum_{k=1}^{\infty} \, \sum_{\tworow{\ibs \in {\mathbb
N}^n_0}{|\ibs|=k}} H_{\ibs}(\boldsymbol{x}, \Sigma) \frac{\tbs^{\ibs}}{\ibs!} & = & \exp \left( \boldsymbol{x} \Sigma^{-1} \tbs^T
- \frac{1}{2} \tbs \Sigma^{-1} \tbs^T \right), \label{(gen1)} \\
1 + \sum_{k=1}^{\infty} \, \sum_{\tworow{\ibs \in {\mathbb
N}^n_0}{|\ibs|=k}} \tilde{H}_{\ibs}(\boldsymbol{x}, \Sigma) \frac{\tbs^{\ibs}}{\ibs!} & = &
\exp \left( \boldsymbol{x}  \tbs^T
- \frac{1}{2} \tbs \Sigma \tbs^T \right). \nonumber
\end{eqnarray}
\end{proof}
In \cite{DinNierSen}, it is proved that Appell polynomials are umbrally represented by the polynomial umbra 
$x + \alpha.$ It is well-known that univariate Hermite polynomials are Appell polynomials. Then equivalences
(\ref{(herfinumb)}) show that also multivariate Hermite polynomials are of Appell type.
Let us underline that the umbra $-1 \punt \beta$ allows us a simple expression of multivariate Hermite polynomials,
without the employment of the imaginary unit of equations (\ref{(herfin)}). Moreover, since the moments of
$\boldsymbol{Z} \sim N(\boldsymbol{0},\Sigma)$  are umbrally represented by the umbra
$\beta \punt \mubs,$ then the $n$-th tuple $\mubs \equiv \chi \punt (\beta \punt \mubs)$ umbrally represents 
the multivariate cumulants of $\boldsymbol{Z}.$ 

The following proposition states that the multivariate Hermite polynomials are special generalized Bell polynomials.
\begin{proposition} \label{cor1}
If $\mubs$ is a $n$-tuple of umbral polynomials such that $f(\mubs, \tbs)= 1 + \frac{1}{2} \tbs \Sigma^{-1} \tbs$
and $\mubs_{\boldsymbol{x}}$ is a $n$-tuple of umbral polynomials such that $f(\mubs_{\boldsymbol{x}}, \tbs)=
1+ f(\mubs, \tbs + \boldsymbol{x}) - f(\mubs, \tbs),$ then $H_{\ibs}(\boldsymbol{x}, \Sigma) \simeq (-1)^{|\ibs|}
(-1 \punt \beta \punt \mubs_{\boldsymbol{x}})^{\ibs}.$
\end{proposition}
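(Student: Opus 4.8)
The plan is to relate the two generating functions directly and then invoke the umbral dictionary established in the preceding results. Recall from the previous proposition that the $n$-tuple $\mubs$ with $f(\mubs,\tbs)=1+\tfrac12\tbs\Sigma^{-1}\tbs^T$ satisfies $H_{\ibs}(\boldsymbol{x},\Sigma)\simeq(-1\punt\beta\punt\nubs+\boldsymbol{x}\Sigma^{-1})^{\ibs}$ when $\nubs$ is the companion tuple with $f(\nubs,\tbs)=1+\tfrac12\tbs\Sigma^{-1}\tbs^T$; here the roles coincide, so I would start from the explicit generating function \eqref{(gen1)},
\begin{equation*}
1 + \sum_{k=1}^{\infty} \, \sum_{\tworow{\ibs \in {\mathbb N}^n_0}{|\ibs|=k}} H_{\ibs}(\boldsymbol{x}, \Sigma) \frac{\tbs^{\ibs}}{\ibs!} = \exp \left( \boldsymbol{x} \Sigma^{-1} \tbs^T - \tfrac{1}{2} \tbs \Sigma^{-1} \tbs^T \right).
\end{equation*}

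First I would compute the generating function of $-1\punt\beta\punt\mubs_{\boldsymbol{x}}$. By Remark \ref{mf} (the composition-umbra identity), $f(-1\punt\beta\punt\mubs_{\boldsymbol{x}},\tbs)=f[-1\punt\chi, f(\mubs_{\boldsymbol{x}},\tbs)-1]$, and since $E[(-1\punt\chi)^n]=(-1)^n n!$ one has $f(-1\punt\chi,t)=\sum_n(-1)^n t^n=1/(1+t)$ for formal purposes — but more cleanly, $-1\punt\beta\punt\mubs_{\boldsymbol{x}}$ is just the umbra whose generating function is $f(\mubs_{\boldsymbol{x}},-\tbs)$ shifted appropriately; concretely $f(-1\punt\beta\punt\mubs_{\boldsymbol{x}},\tbs)=2-f(\mubs_{\boldsymbol{x}},\tbs)$ is \emph{not} right in general, so instead I would use that $-1\punt\beta\punt\mubs_{\boldsymbol{x}}$ has generating function $\exp\{-[f(\mubs_{\boldsymbol{x}},\tbs)-1]\}$. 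Plugging in the definition $f(\mubs_{\boldsymbol{x}},\tbs)=1+f(\mubs,\tbs+\boldsymbol{x})-f(\mubs,\tbs)$ gives
\begin{equation*}
f(-1\punt\beta\punt\mubs_{\boldsymbol{x}},\tbs)=\exp\{-[f(\mubs,\tbs+\boldsymbol{x})-f(\mubs,\tbs)]\}=\exp\left\{-\tfrac12(\tbs+\boldsymbol{x})\Sigma^{-1}(\tbs+\boldsymbol{x})^T+\tfrac12\tbs\Sigma^{-1}\tbs^T\right\},
\end{equation*}
and expanding the quadratic form this equals $\exp\{-\boldsymbol{x}\Sigma^{-1}\tbs^T-\tfrac12\boldsymbol{x}\Sigma^{-1}\boldsymbol{x}^T\}$.

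Comparing with the Hermite generating function above, the two differ only in the sign of the linear term $\boldsymbol{x}\Sigma^{-1}\tbs^T$ and by the constant factor $\exp\{-\tfrac12\boldsymbol{x}\Sigma^{-1}\boldsymbol{x}^T\}$, which is independent of $\tbs$. Replacing $\tbs$ by $-\tbs$ in $f(-1\punt\beta\punt\mubs_{\boldsymbol{x}},\tbs)$ flips the sign of the linear term, and at the level of coefficients this substitution multiplies the $\ibs$-th coefficient by $(-1)^{|\ibs|}$. The constant $\tbs$-independent factor is more delicate: it must be absorbed into the definition of $\mubs_{\boldsymbol{x}}$ — note $f(\mubs_{\boldsymbol{x}},\boldsymbol{0})=1$ forces consistency, and indeed evaluating the quadratic identity at $\tbs=\boldsymbol{0}$ shows the exponent vanishes there, so the constant factor is exactly $1$ after the correct normalization; I would verify this bookkeeping carefully. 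Hence $H_{\ibs}(\boldsymbol{x},\Sigma)\frac{1}{\ibs!}$ equals $(-1)^{|\ibs|}$ times the $\ibs$-th coefficient of $f(-1\punt\beta\punt\mubs_{\boldsymbol{x}},\tbs)$, i.e. $H_{\ibs}(\boldsymbol{x},\Sigma)\simeq(-1)^{|\ibs|}(-1\punt\beta\punt\mubs_{\boldsymbol{x}})^{\ibs}$.

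The main obstacle I anticipate is the precise handling of the $\tbs$-independent exponential factor and the sign substitution $\tbs\mapsto-\tbs$: one must check that the definition $f(\mubs_{\boldsymbol{x}},\tbs)=1+f(\mubs,\tbs+\boldsymbol{x})-f(\mubs,\tbs)$ is exactly the shift that makes the constant term disappear and leaves precisely the Hermite exponent up to the overall sign $(-1)^{|\ibs|}$. Everything else — the passage from generating functions to umbral equivalence of coefficients — is immediate from the definition of the composition umbra $-1\punt\beta\punt(\cdot)$ and the coefficientwise extension of $E$.
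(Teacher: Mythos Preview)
Your overall strategy is exactly the paper's: show that the exponential generating function of the sequence $(-1)^{|\ibs|}(-1\punt\beta\punt\mubs_{\boldsymbol{x}})^{\ibs}$ is $f(-1\punt\beta\punt\mubs_{\boldsymbol{x}},-\tbs)$ (the substitution $\tbs\mapsto-\tbs$ accounts for the factor $(-1)^{|\ibs|}$), and then check that this equals the Hermite generating function~(\ref{(gen1)}). The paper does precisely this in two displayed lines, simply asserting the final equality $f(-1\punt\beta\punt\mubs_{\boldsymbol{x}},-\tbs)=\exp\bigl(\boldsymbol{x}\Sigma^{-1}\tbs^T-\tfrac12\tbs\Sigma^{-1}\tbs^T\bigr)$ without writing out the intermediate quadratic algebra.

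However, your explicit computation exposes a genuine mismatch that you then try to wave away incorrectly. You obtain
\[
f(-1\punt\beta\punt\mubs_{\boldsymbol{x}},\tbs)=\exp\Bigl\{-\boldsymbol{x}\Sigma^{-1}\tbs^T-\tfrac12\,\boldsymbol{x}\Sigma^{-1}\boldsymbol{x}^T\Bigr\},
\]
and after $\tbs\mapsto-\tbs$ this becomes $\exp\{\boldsymbol{x}\Sigma^{-1}\tbs^T-\tfrac12\boldsymbol{x}\Sigma^{-1}\boldsymbol{x}^T\}$. This does \emph{not} differ from the Hermite generating function merely by a sign and a $\tbs$-independent constant: the term $-\tfrac12\tbs\Sigma^{-1}\tbs^T$ is simply absent from your expression, and in its place sits $-\tfrac12\boldsymbol{x}\Sigma^{-1}\boldsymbol{x}^T$. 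Your claim that ``evaluating the quadratic identity at $\tbs=\boldsymbol{0}$ shows the exponent vanishes there'' is false --- at $\tbs=\boldsymbol{0}$ one has $f(\mubs_{\boldsymbol{x}},\boldsymbol{0})-1=f(\mubs,\boldsymbol{x})-f(\mubs,\boldsymbol{0})=\tfrac12\boldsymbol{x}\Sigma^{-1}\boldsymbol{x}^T\neq 0$ --- so the constant factor does not disappear, and no amount of normalization recovers the missing quadratic in $\tbs$.

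What your computation actually shows is that the stated definition of $\mubs_{\boldsymbol{x}}$ cannot be right as written: with $f(\mubs_{\boldsymbol{x}},\tbs)=1+f(\mubs,\tbs+\boldsymbol{x})-f(\mubs,\boldsymbol{x})$ (subtracting $f(\mubs,\boldsymbol{x})$ rather than $f(\mubs,\tbs)$) one gets $f(\mubs_{\boldsymbol{x}},\tbs)-1=\tfrac12\tbs\Sigma^{-1}\tbs^T+\boldsymbol{x}\Sigma^{-1}\tbs^T$, and then $f(-1\punt\beta\punt\mubs_{\boldsymbol{x}},-\tbs)=\exp\{\boldsymbol{x}\Sigma^{-1}\tbs^T-\tfrac12\tbs\Sigma^{-1}\tbs^T\}$ on the nose. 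So the approach is correct and matches the paper's, but your attempt to reconcile the discrepancy is a gap; the honest conclusion is that the hypothesis needs this correction for either proof to go through.
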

\begin{proof}
We have
$$1 + \sum_{k \geq 1} \sum_{\ibs : |\ibs|=k} (-1)^{|\ibs|} (- 1 \punt \beta \punt  \boldsymbol{\mu}_{\boldsymbol{x}})^{\ibs} \frac{\tbs^{\ibs}}{\ibs!}
= 1 + \sum_{k \geq 1} \sum_{\ibs : |\ibs|=k} (- 1 \punt \beta \punt
\boldsymbol{\mu}_{\boldsymbol{x}})^{\ibs} \frac{-\tbs^{\ibs}}{\ibs!}$$
where $-\tbs = (-t_1, -t_2, \ldots, -t_n).$ The result follows from (\ref{(gen1)}) since we have
$$1 + \sum_{k \geq 1} \sum_{\ibs : |\ibs|=k} E[(- 1 \punt \beta \punt
\boldsymbol{\mu}_{\boldsymbol{x}})^{\ibs}] \frac{-\tbs^{\ibs}}{\ibs!} = f(- 1 \punt \beta \punt
\boldsymbol{\mu}_{\boldsymbol{x}}, - \tbs) = \exp \left( \boldsymbol{x} \Sigma^{-1} \tbs^T
- \frac{1}{2} \tbs \Sigma^{-1} \tbs^T \right).$$
\end{proof}
Finally we remark that computing efficiently the multivariate Hermite polynomials via equivalences
(\ref{(herfinumb)}) or Proposition \ref{cor1} helps in constructing multivariate Edgeworth
approximation of multivariate density functions, see \cite{Kolassa}.

\section{The UMFB algorithm.}
In this last section, we present a {\tt MAPLE} algorithm for the
computation of the multivariate Fa\`a di Bruno's formula by using the umbral
equivalence (\ref{(belgen2)}). The main steps can be summarized as followed:
\begin{description}
\item{\it i)} to the right-hand-side of equivalence (\ref{(belgen2)}),
we apply the multivariate version of the well-known
multinomial theorem:
\begin{eqnarray}
&  &  \!\!\!\! \!\!\!\! \!\!\!\! \!\!\!\! \!\!\!\! \!\!\!\!
(\mu_1 \punt \beta \punt \nubs_1 + \cdots + \mu_n \punt \beta \punt \nubs_n)^{\ibs} \nonumber \\
& = &  \sum_{(\kbs_1, \ldots,\kbs_n) : \sum_{i=1}^n \kbs_i = \ibs}
{\ibs \choose {\kbs_1, \ldots, \kbs_n}} (\mu_1 \punt \beta \punt
\nubs_1)^{\kbs_1} \cdots (\mu_n \punt \beta \punt
\nubs_n)^{\kbs_n}. \label{(belgen3)}
\end{eqnarray}
The procedure {\tt mkT} finds all the vectors
$(\kbs_1, \ldots,\kbs_n)$ such that $\sum_{i=1}^n \kbs_i = \ibs.$
\item{\it ii)} Then, it is necessary to expand powers like
$(\mu_j \punt \beta \punt \nubs_j)^{\kbs_j}.$ The procedure {\tt MFB}
realizes this computation by using equivalence (\ref{eq:17}), with $\alpha$ replaced by $\mu_j.$
This procedure makes use of the procedure {\tt makeTab}, available online at
\begin{verbatim}
http://www.maplesoft.com/applications/view.aspx?SID=33039
\end{verbatim}
We will add more details on the output of procedure {\tt makeTab} later on.
\item{\it iii)} The procedure {\tt joint} provides a way to multiply the
factors $(\mu_1 \punt \beta \punt \nubs_1)^{\kbs_1} \cdots (\mu_n \punt
\beta \punt \nubs_n)^{\kbs_n}$ in (\ref{(belgen3)}), previously expanded
in {\it ii)}.
\item{\it iv)} Finally, occurrences of
products like $\mu_1^{j_1} \mu_2^{j_2} \cdots \mu_n^{j_n}$ are replaced by
$g_{j_1, j_2, \ldots, j_n}$ while occurrences of products like $(\nubs_i)_{\lambdabs},$
where $\lambdabs = (\lambdabs_1^{r_1}, \lambdabs_2^{r_2}, \ldots),$
are replaced by $(h^{(i)}_{\lambdabs_1})^{r_1} (h^{(i)}_{\lambdabs_2})^{r_2} \cdots,$ where
$h^{(i)}_{\jbs}$ denotes the $\jbs$-th coefficient of $f(\nubs_i, \tbs_{(m)}).$
\end{description}

All these steps are combined in the procedure {\tt UMFB}. Some computational
results are given in Table $1$. All tasks have been performed on a PC Pentium(R)4
Intel(R), CPU 2.3 Ghz, 2.0 GB Ram with {\tt MAPLE} version 12.0. The
procedure {\tt diff} of Table $1$ is a {\tt MAPLE} routine by which
the multivariate Fa\`a di Bruno's formula is computable making use of the
chain rule.

Now let us give more details on the quoted procedure {\tt makeTab}.
This procedure gives multiset subdivisions. A multiset $M$ is a pair $(\bar{M},f),$
where $\bar{M}$ is a set, called {\it support} of the multiset, and $f$ is a
function $f: \bar{M} \rightarrow {\mathbb N}_0.$ For each $\mu \in \bar{M},$
the integer $f(\mu)$ is called the {\it multiplicity} of $\mu.$ The notion of
multiset subdivision is quite natural and it is equivalent to split the
multiset into disjoint blocks (submultisets) whose union gives the whole multiset.
For a formal definition, the reader is referred to \cite{Bernoulli}. As example,
for the multiset $M=\{\mu_1, \mu_1, \mu_2\},$ the subdivisions are
\begin{equation}
\{\{\mu_1, \mu_1, \mu_2\}\}; \{\{\mu_1, \mu_1\},\{\mu_2\}\}; \{\{\mu_1, \mu_2,\} \{\mu_1\}\};
\{\{\mu_1\}, \{\mu_1\}, \{\mu_2\}\}.
\label{(belgen4)}
\end{equation}
If the input parameter is the vector $\ibs$, the output of the procedure {\tt makeTab}
gives all the subdivisions of a multiset having the vector $\ibs$ as vector of
multiplicities, that is $f(\mu_j)=i_j$ for $j=1,2, \ldots, n.$
At a first glance, a multiset subdivision does not seem to have any relation
with multi-index partitions. But any subdivision of a multiset $M,$ having $\ibs$ as vector of
multiplicities, corresponds to a suitable partition of the multi-index $\ibs$. As example, the multiset
$M=\{\mu_1, \mu_1, \mu_2\}$ corresponds to the multi-index $(2,1),$ and the subdivisions
in (\ref{(belgen4)}) correspond to the multi-index partitions
$${2 \choose 1}, { {2,0} \choose {0,1}}, {{1,1} \choose {1,0}}, {{1,1,0} \choose {0,0,1}}.$$
Therefore the procedure {\tt makeTab} allows us to compute all the partitions of a multi-index
$\ibs.$ On the other hand, the connection between the combinatorics of multisets and the Fa\`a di Bruno's formula 
was already remarked in \cite{Hardy}. 
\begin{table}
\begin{tabular}{ccccc} \hline
$\ibs$ & $m$ & $\#$ terms in output & Time ({\tt UMFB}) & Time ({\tt diff}) \\ \hline
$(6,5)$   & $2$ &   $14089$   &  $0.7$  & $1.6$ \\
$(7,6)$   & $2$ &   $60190$   &  $3.2$  & $29.8$ \\
$(7,7)$   & $2$ &   $123134$  &  $8.1$  & $75.4$ \\
$(5,4)$   & $3$ &   $20208$   &  $0.7$  & $2.3$ \\
$(6,5)$   & $3$ &   $122034$  &  $6.3$  & $62.5$ \\
$(5,4)$   & $4$ &   $86768$   &  $4.5$  & $26.9$ \\
$(5,4)$   & $5$ &   $288370$  &  $25.9$ & $130.9$ \\
$(4,4,3)$ & $2$ &   $95138$   &  $6.3$  & $12.3$ \\
$(4,4,4)$ & $2$ &   $257854$  &  $22.8$ & $41.1$ \\
$(4,3,3)$ & $3$ &   $313866$  &  $22.5$ & $54.5$ \\
$(4,2,2)$ & $4$ &   $106912$  &  $6.5$  & $17.3$ \\ \hline
\end{tabular}
\caption{Comparison of computational times in seconds.}
\end{table}

\section{Appendix 1.}
The {\tt UMFB} algorithm.
\begin{verbatim}
>
MFB := proc()
local n,vIndets,E;
option remember;
n:=add(args[i],i=1..nargs);
if n=0 then return(1);fi;
vIndets:=[seq( alpha[i],i=1..nargs)];
E:=add(f[nops(y[1])]*
       y[2]*
       mul(g[seq(degree(x,vIndets[i]),
                 i=1..nops(vIndets))],x=y[1]),
   y=makeTab(args));
end:
>
joint := proc()
local p1,p2,M,V;
V  := `if`(nargs=1, [[args]], [args]);
p1 := mul(add(y,y=x)!, x=ListTools[Transpose](V));
p2 := mul(x!, x=ListTools['Flatten'](V));
M  := max(seq( add(y,y=x), x=V ));
expand(p1/p2*mul(  eval(MFB(op(V[i])) ,[g=g||i,
    seq(f[j]=f||i^j,j=1..M)]), i=1..nargs ));
end:
>
mkT := proc(V,n)
local vE,L,nV;
nV:=nops(V);
vE:=[seq(alpha[i],i=1..nV)];
L:=seq( `if`(nops(x[1])<=n,x[1],NULL), x=makeTab( op(V) ));
L:=seq([seq([seq(degree(y,z),z=vE)],y=x),[0$nV]$(n-nops(x))],x=[L]);
L:=seq( op(combinat[permute](x)),x=[L]);
end:
>
UMFB := proc(V, n)
  local S,vE;
  if n=1 then
     return(expand(eval(MFB(op(V)),[g=g1]))); fi;
  vE:=[seq(f||i=1,i=1..n)];
  S:=add(joint( op(x) ), x=[mkT(V,n)]);
  add( f[ seq(degree(x,f||i),i=1..n) ]*
          eval(x,vE), x=S );
end:
\end{verbatim}
\section{Appendix 2.}
The output of the routine {\tt diff} of {\tt MAPLE}, for
$\frac{\partial^2}{\partial x1 \partial x2} f(g1(x1,x2),g2(x1,x2))$
is
\begin{eqnarray*}
&  & \textstyle{ \left( {\frac {\partial }{\partial x1}} g1 \left( x1
, x2 \right)  \right)  \left( D_{{1,1}} \right)  \left( f
 \right)  \left( g1 \left( x1, x2 \right), g2
 \left( x1, x2 \right)  \right) {\frac {\partial }{
\partial x2}}g1 \left( x1, x2 \right)} \\
& + & \textstyle{\left(
{\frac {\partial }{\partial x1}}g1 \left( x1, x2 \right)  \right)  \left( D_{{1,2}} \right)  \left( f \right)
 \left( g1 \left( x1, x2 \right) , g2 \left( {
\it x1}, x2 \right)  \right) {\frac {\partial }{\partial x2
}} g2 \left( x1, x2 \right)}  \\
& + & \textstyle{ D_{{1}} \left( f \right)
 \left( g1 \left( x1, x2 \right) , g2 \left( {
\it x1}, x2 \right)  \right) {\frac {\partial ^{2}}{\partial {
\it x1}\partial x2}}g1 \left( x1, x2 \right)} \\
& + &  \textstyle{ \left( {\frac {\partial }{\partial x1}} g2 \left( x1
, x2 \right)  \right)  \left( D_{{1,2}} \right)  \left( f
 \right)  \left( g1 \left( x1, x2 \right) , g2
 \left( x1, x2 \right)  \right) {\frac {\partial }{
\partial x2}}g1 \left( x1, x2 \right)} \\
& + & \textstyle{ \left(
{\frac {\partial }{\partial x1}} g2 \left( x1, x2 \right)  \right)  \left( D_{{2,2}} \right)  \left( f \right)
 \left( g1 \left( x1, x2 \right) , g2 \left( {
\it x1}, x2 \right)  \right) {\frac {\partial }{\partial x2
}} g2 \left( x1, x2 \right)} \\
& + & \textstyle{D_{{2}} \left( f \right)
 \left( g1 \left( x1, x2 \right) , g2 \left( {
\it x1}, x2 \right)  \right) {\frac {\partial ^{2}}{\partial {
\it x1}\partial x2}} g2 \left( x1, x2 \right)}.
\end{eqnarray*}
The output of the routine {\tt UMFB} for
$\frac{\partial^2}{\partial x1 \partial x2} f(g1(x1,x2),g2(x1,x2))$ is:
\begin{eqnarray*}
&  &
f_{{1,0}} {\it g1}_{{1,1}} + f_{{2,0}} {\it g1}_{{1,0}} {\it g1}_{{0,1}} +
f_{{0,1}} {\it g2}_{{1,1}} + f_{{0,2}} {\it g2}_{{1,0}} {\it g2}_{{0,1}} +
f_{{1,1}} {\it g1}_{{1,0}} {\it g2}_{{0,1}} + f_{{1,1}} {\it g1}_{{0,1}} {\it g2}_{{1,0}}.
\end{eqnarray*}

\end{document}